\documentclass{amsart}
\usepackage{amsmath, amssymb, amsthm, amsfonts}
\usepackage{hyperref}
\usepackage{cleveref}
\usepackage[a4paper,left=20mm,right=20mm,top=30mm,bottom=25mm]{geometry}
\usepackage{kotex}
\usepackage{blkarray}
\usepackage{enumerate}
\usepackage{xcolor}
\usepackage{stmaryrd} 
\usepackage{graphicx}
\usepackage[all]{xy} 
\usepackage{mathrsfs}
\usepackage{eufrak}
\usepackage{comment}
\usepackage{mathtools}

\theoremstyle{plain}
\newtheorem{thm}{Theorem}[section]
\newtheorem{lem}[thm]{Lemma}
\newtheorem{prop}[thm]{Proposition}

\theoremstyle{definition}

\theoremstyle{remark}
\newtheorem{rem}[thm]{Remark}

\newcommand{\bl}{\textrm{bl}}

\newcommand{\pr}{\textrm{pr}}

\newcommand{\lra}{\longrightarrow}

\newcommand{\cO}{\mathcal{O}}
\newcommand{\cI}{\mathcal{I}}

\newcommand{\res}{\mathrm{res}}

\title[Hilbert polynomials of secant varieties]{Hilbert polynomials of the first and second secant varieties}

\author{Doyoung Choi}
\address{School of Mathematics, Korea Institute for Advanced Study, 85 Hoegiro, Dongdaemun-gu, Seoul 02455, Republic of Korea}
\email{cdy4019@kias.re.kr}

\author{Jinhyung Park}
\address{Department of Mathematical Sciences, KAIST, 291 Daehak-ro, Yuseong-gu, Daejeon 34141, Republic of Korea}
\email{parkjh13@kaist.ac.kr}

\date{\today}

\thanks{D.C. was supported by a KIAS individual Grant (MG105101) at Korea Institute for Advanced Study. J.P. was partially supported by the National Research Foundation (NRF) funded by the Korea government (MSIT) (RS-2026-25478877).}

\begin{document}

\begin{abstract}
In this paper, we give a description of the cohomology groups of the symmetric powers of the tautological bundle associated with a sufficiently positive line bundle on the Hilbert scheme of 2 or 3 points on a smooth projective complex variety. The dimensions of these cohomology groups are expressed in terms of the Hilbert polynomials of the first and second secant varieties of the embedding given by the sufficiently positive line bundle. We also compute these Hilbert polynomials completely.
\end{abstract}

\maketitle
\section{Introduction}

\noindent Let $X$ be a smooth projective complex variety of dimension $n$, and $L$ be a line bundle on $X$. Denote by $X^{[k+1]}$ the Hilbert scheme  of $k+1$ points on $X$, and consider the tautological bundle  $E_{k+1,L}$ on $X^{[k+1]}$ associated to $L$ (see Section \ref{sec:prelim} for the definitions and notations). The cohomology groups $H^i(X^{[k+1]}, S^\ell E_{k+1,L})$ of symmetric powers of the tautological bundle $E_{k+1,L}$ has attracted considerable attention (see e.g., \cite{Danila1, Danila2, ENP2, Krug, Scala}). This has been studied mainly for the case where $n$ and $\ell$ are small and $k$ is arbitrary. In this paper, we deal with the case where $n$ and $\ell$ are arbitrary and $k=1$ or $2$ under the assumption that $L$ is sufficiently positive (see Remark \ref{rem:suffposcond} for the discussion on the required positivity condition for $L$). Our result may provide a complement to the existing literature.

\begin{thm}\label{thm:main1}
Assume that $L$ is sufficiently positive. For an integer $\ell \geq 1$, we have the following:
\begin{enumerate}
    \item $H^i(X^{[2]}, S^\ell E_{2,L}) = 
    \begin{cases} 
    H^0(\Sigma_1, \mathcal{O}_{\Sigma_1}(\ell)) & \text{for $i=0$} \\ 
    H^i(X, \mathcal{O}_X) \otimes H^0(X, \mathcal{O}_X(\ell)) & \text{for $1 \leq i \leq n$} \\
    0 & \text{for $i \geq n+1$}.
    \end{cases}$
    \item $H^i(X^{[3]}, S^\ell E_{3, L}) = 
    \begin{cases}
    H^0(\Sigma_2, \mathcal{O}_{\Sigma_2}(\ell)) & \text{for $i=0$} \\
    \begin{aligned} 
    & \big( H^i(X, \mathcal{O}_X) \otimes H^0(\Sigma_1, \mathcal{O}_{\Sigma_1}(\ell)) \big) \\[-7pt] 
    & \oplus \big(H^i(X^{[2]}, \mathcal{O}_{X^{[2]}})/H^i(X, \mathcal{O}_X) \otimes H^0(X, \mathcal{O}_X(\ell))\big) 
    \end{aligned} & \text{for $1 \leq i \leq n$}\\
    H^i(X^{[2]}, \mathcal{O}_{X^{[2]}}) \otimes H^0(X, \mathcal{O}_X(\ell)) & \text{for $n+1 \leq i \leq 2n$}\\
    0 & \text{for $i \geq 2n+1$}.
    \end{cases}$
\end{enumerate}
\end{thm}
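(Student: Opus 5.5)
The plan is to use the geometry of secant varieties through the tautological morphism. Because $L$ is sufficiently positive, $E_{k+1,L}$ is globally generated. The surjection $H^0(X,L)\otimes \mathcal{O}\to E_{k+1,L}$ therefore induces a morphism
\[
\beta_k\colon B^{k}:=\mathbb{P}(E_{k+1,L})\lra \mathbb{P}(H^0(X,L)),
\]
whose image is the $k$-th secant variety $\Sigma_k$, and $\mathcal{O}_{\mathbb{P}(E_{k+1,L})}(1)=\beta_k^*\mathcal{O}(1)$. Since $\pi_*\mathcal{O}_{\mathbb{P}(E)}(\ell)=S^\ell E$ with vanishing higher direct images, we get
\[
H^i(X^{[k+1]},S^\ell E_{k+1,L})=H^i(B^k,\beta_k^*\mathcal{O}(\ell)).
\]
The problem thus reduces to computing $R\beta_{k*}\mathcal{O}_{B^k}$ on $\Sigma_k$ and then applying the projection formula.

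The key input, which I expect to be established in the paper's preliminaries following Ullery and Ein--Niu--Park for $L$ sufficiently positive, is the following. $\Sigma_k$ is normal, $\beta_{k*}\mathcal{O}_{B^k}=\mathcal{O}_{\Sigma_k}$, and the higher direct images $R^i\beta_{k*}\mathcal{O}_{B^k}$ for $i>0$ are supported on $\Sigma_{k-1}$. They are described as follows.
\begin{itemize}
\item For $k=1$: the fibre over a point $x\in X=\Sigma_0$ is $\mathbb{P}^{n}$-like and is identified via $\mathrm{bl}$ with $X$ blown up at $x$. This gives $R^i\beta_{1*}\mathcal{O}=H^i(X,\mathcal{O}_X)\otimes\mathcal{O}_X$.
\item For $k=2$: over $\Sigma_1\setminus X$ we get $H^i(X,\mathcal{O}_X)\otimes \mathcal{O}_{\Sigma_1}$. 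Over $X$ there is an extra contribution $H^i(X^{[2]},\mathcal{O})/H^i(X,\mathcal{O}_X)\otimes\mathcal{O}_X$ for $1\le i\le n$, and $H^i(X^{[2]},\mathcal{O})\otimes\mathcal{O}_X$ for $n+1\le i\le 2n$.
\end{itemize}

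Next I would run the Leray spectral sequence $H^p(\Sigma_k,R^q\beta_{k*}\mathcal{O}(\ell))\Rightarrow H^{p+q}$. Because $\mathcal{O}(1)$ is very positive (Serre-type vanishing is guaranteed by the positivity assumption on $L$), all $H^{p}$ with $p>0$ vanish: for $\mathcal{O}_X(\ell)$, $\mathcal{O}_{\Sigma_1}(\ell)$ and $\mathcal{O}_{\Sigma_k}(\ell)$, $\ell\ge1$. In the $k=1$ case this uses the known vanishing $H^p(\Sigma_1,\mathcal{O}(\ell))=0$ for $p>0$, which follows from $\Sigma_1$ having Du Bois/rational-type singularities plus Kodaira-type vanishing on $B^1$. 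The sequence then degenerates to $H^i=H^0(\Sigma_k,R^i\beta_{k*}\mathcal{O}(\ell))$, which gives (1) directly; the vanishing for $i\ge n+1$ holds because the fibres have dimension at most $n$.

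For (2), I would decompose $R^i\beta_{2*}\mathcal{O}$ with $1\le i\le n$ using a short exact sequence
\[
0\to H^i(X,\mathcal{O}_X)\otimes\mathcal{O}_{\Sigma_1}\to R^i\beta_{2*}\mathcal{O}\to \big(H^i(X^{[2]},\mathcal{O})/H^i(X,\mathcal{O}_X)\big)\otimes\mathcal{O}_X\to0.
\]
Twisting by $\mathcal{O}(\ell)$ and taking $H^0$, it splits because $H^1(\Sigma_1,\mathcal{O}_{\Sigma_1}(\ell))=0$ (over $\mathbb{C}$ the extension splits at the level of cohomology dimensions, and canonically as stated). This yields the formula. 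The vanishing for $i\ge 2n+1$ again follows from the fibre dimension bound.

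The main obstacle is computing $R^i\beta_{2*}\mathcal{O}_{B^2}$ over points of $X\subset\Sigma_2$. The fibre there is roughly $\mathbb{P}$ of a bundle over the locus of length-3 subschemes containing $x$, and it degenerates badly. My first approach would be to use the Ein--Niu--Park relative secant construction, in which $\beta_2$ factors through blowups along $\Sigma_0$ and $\Sigma_1$. I would then compute the fibre cohomology via the formal functions theorem, showing that the fibre over $x$ has $H^i(\mathcal{O})=H^i(X^{[2]},\mathcal{O})$ and that the restriction map to the generic fibre over $\Sigma_1$ is the pullback $H^i(X,\mathcal{O}_X)\to H^i(X^{[2]},\mathcal{O})$.
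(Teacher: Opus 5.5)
\textbf{Verdict: there is a genuine gap.} The central computation of the higher direct images is assumed rather than proved.

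Your outer framework is fine and matches the paper's final step:
the identification $H^i(X^{[k+1]},S^\ell E_{k+1,L})=H^i(B,\mathcal{O}_B(\ell))$;
the degeneration of the Leray spectral sequence, using the vanishing of $H^p(\Sigma_k,\mathcal{O}_{\Sigma_k}(\ell))$ for all $\ell\ge1$ (this is Theorem~\ref{thm:secantvariety}~(2), not a Serre-type statement);
and the splitting of the $H^0$-sequence via $H^1(\Sigma_1,\mathcal{O}_{\Sigma_1}(\ell))=0$.

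The gap is the description of $R^i\alpha_{2,*}\mathcal{O}_{B^2}$ and $R^i\alpha_{3,*}\mathcal{O}_{B^3}$ (your $\beta_1,\beta_2$). This is not a preliminary: it is Proposition~\ref{prop:R^ialpha_*O_B}, the real content of the proof, and you assume it. Your fibrewise justification does not establish it.
\begin{enumerate}
\item $\alpha_k$ is not flat. Knowing that the reduced fibre over $x\in X$ is $\mathrm{Bl}_xX$ (not anything $\mathbb{P}^n$-like), or that it has the cohomology of $X^{[2]}$ in the case of $\alpha_3$, does not compute $R^i\alpha_{k,*}\mathcal{O}_{B^k}$.
\item The theorem on formal functions needs the cohomology of all infinitesimal thickenings of the (possibly non-reduced) scheme-theoretic fibres, which you do not control.
\item Fibre data would at best give ranks, not the sheaf structure you actually use when taking $H^0(\,\cdot\,\otimes\mathcal{O}(\ell))$. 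You need that on $X$ the sheaf is the \emph{trivial} bundle $H^i(X,\mathcal{O}_X)\otimes\mathcal{O}_X$. For $\alpha_3$ you need that it contains $H^i(X,\mathcal{O}_X)\otimes\mathcal{O}_{\Sigma_1}$ with quotient $\big(H^i(X^{[2]},\mathcal{O}_{X^{[2]}})/H^i(X,\mathcal{O}_X)\big)\otimes\mathcal{O}_X$.
\item The blow-up factorization of $\alpha_3$ along $\Sigma_0$ and $\Sigma_1$ that you want to import from the curve case is not available here.
\end{enumerate}

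The missing idea is the vanishing $R^i\alpha_{k,*}\mathcal{O}_{B^k}(-Z^k_{k-1})=0$ for $i>0$ (Theorem~\ref{thm:secantvariety}~(3)). Applied to $0\to\mathcal{O}_{B^k}(-Z^k_{k-1})\to\mathcal{O}_{B^k}\to\mathcal{O}_{Z^k_{k-1}}\to0$, it gives $R^i\alpha_{k,*}\mathcal{O}_{B^k}\cong R^i\alpha_{k,*}\mathcal{O}_{Z^k_{k-1}}$ for $i\ge1$. This moves everything to the exceptional divisor, where the map is explicit.

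For $k=2$: $Z^2_1=X^{[1,2]}=\mathrm{Bl}_\Delta(X\times X)$ and $\alpha_2|_{Z^2_1}=\res=\pr_1\circ\bl$. Since $R\bl_*\mathcal{O}=\mathcal{O}$, one gets $R^i\res_*\mathcal{O}=R^i\pr_{1,*}\mathcal{O}_{X\times X}=H^i(X,\mathcal{O}_X)\otimes\mathcal{O}_X$ globally.

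For $k=3$ the paper uses the following steps.
\begin{enumerate}
\item Resolve $Z^3_2$ by $\alpha_{2,3}\colon B^{2,3}\to Z^3_2$.
\item Use the sequence $0\to\alpha_{2,3,*}\mathcal{O}_{B^{2,3}}(-\widetilde{\tau}^*Z^2_1)\to\mathcal{O}_{Z^3_2}\to\mathcal{O}_{Z^3_1}\to0$, together with $R^j\alpha_{2,3,*}=0$ for $j>0$.
\item Use the identity $\alpha_3\circ\alpha_{2,3}=\alpha_2\circ\widetilde{\tau}$. With part (1) and $\tau=\pr_2\circ\bl$, this computes $R^i\alpha_{3,*}$ of the $\alpha_{2,3,*}$-pushforwards.
\item Use $Z^3_1=\mathcal{Z}_3$ and $X^{[2,3]}=\mathrm{Bl}_{\mathcal{Z}_2}(X\times X^{[2]})$ to get $R^i\alpha_{3,*}\mathcal{O}_{Z^3_1}=H^i(X^{[2]},\mathcal{O}_{X^{[2]}})\otimes\mathcal{O}_X$.
\end{enumerate}
Comparing the resulting exact sequences produces exactly the extension you wrote down. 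Without this step, or a genuine substitute, the formulas for $1\le i\le n$ in (1) and for $1\le i\le 2n$ in (2) remain unproved. Only the vanishing above the fibre dimension survives from your argument.
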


This theorem is a higher dimensional generalization of \cite[Theorem 4.2 (2)]{ENP2}, which deals with the case where $X$ is a curve. Since the cohomology groups $H^i(X^{[2]}, \mathcal{O}_{X^{[2]}})$ can be explicitly determined (Lemma \ref{lem:H^i(O_X^[2])}), the problem is reduced to computing the Hilbert function $H_{\Sigma_k}(\ell)=h^0(\Sigma_k, \mathcal{O}_{\Sigma_k}(\ell))$ of the $k$-th secant variety $\Sigma_k$ of $X \subseteq \mathbb{P} H^0(X, L) = \mathbb{P}^r$ for $k=1$ and $2$, which is defined as
$$
\Sigma_k= \Sigma_k(X, L) := \overline{\bigcup_{x_1, \ldots, x_{k+1} \in X} \langle x_1, \ldots, x_{k+1} \rangle} \subseteq \mathbb{P}^r.
$$
Here as $L$ is sufficiently positive, $L$ is very ample giving an embedding of $X$ into $\mathbb{P}^r$. There has been a great deal of work on secant varieties in various flavors (see e.g., \cite{AP25, Choi.Lacini.Park.Sheridan.25, Choi, Chou.Song.18, ENP1, ENP2, Olano.Raychaudhury.Song, Ullery}). Recently, many people are interested in the connection to the tensor decomposition problem and computational complexity theory (see \cite{Landsberg.Tensors, Landsberg.Complexity}).
In the main cases of this paper, namely, $k=1$ or $2$, it is known that $H^i(\Sigma_k, \mathcal{O}_{\Sigma_k}(\ell)) = 0$ for $i>0$ and $\ell > 0$ (see Theorem \ref{thm:secantvariety}). Thus $h^0(\Sigma_k, \mathcal{O}_{\Sigma_k}(\ell))= \chi(\mathcal{O}_{\Sigma_k}(\ell))$ for $\ell \geq 1$. In other words, the Hilbert function $H_{\Sigma_k}(\ell)$ coincides with the Hilbert polynomial $P_{\Sigma_k}(\ell):=\chi(\mathcal{O}_{\Sigma_k}(\ell))$. In this paper, we express $P_{\Sigma_k}(\ell)$ in terms of $\Gamma:=\sum_{i=1}^n (-1)^n h^p(X, \mathcal{O}_X)$ and the following functions
$$
l(a):=\chi(L^a),~s_1(a,b):=\chi(S^a \Omega_X \otimes L^b),~s_2(a,b,c):=\chi(S^a \Omega_X \otimes S^b \Omega_X \otimes L^c)
$$
depending only on the cotangent bundle $\Omega_X$ and the embedding line bundle $L$. 

\begin{thm}\label{thm:main2}
Assume that $L$ is sufficiently positive. For an integer $\ell \in \mathbb{Z}$, we have the following: 
\begin{enumerate}
    \item $\displaystyle P_{\Sigma_1}(\ell) = \sum_{m=0}^{2n+1} \left( l(2m+1) + \sum_{i=1}^{m} \Big(l(2m+1-i)l(i) - \sum_{j=0}^{2i-1} s_1(j,2m+1) \Big) \right) \prod_{\substack{t=0 \\ t \neq m}}^{2n+1} \frac{\ell - 2t-1}{2m - 2t}$.
    \item $\displaystyle P_{\Sigma_2}(\ell) = \sum_{m=0}^{3n+2} P_{\Sigma_2}(3m+2) \prod_{\substack{t=0\\t\neq m}}^{3n+2} \frac{\ell - 3t - 2}{3m-3t}$, where\\[-10pt]
\begin{footnotesize}
\begin{align*}
&P_{\Sigma_2}(3m+2) = P_{\Sigma_1}(3m+2)  \\[-5pt]
&  +\sum_{i=1}^m 
\Bigg( P_{\Sigma_1}(3m-i+2)  + \Gamma \cdot l(3m-i+2) - \sum_{p=1}^{i+1} \Big( l(i+1-p)\cdot l(3m-2i+1+p)- \sum_{q=0}^{2i-2p+1} s_1(q,3m-i+2) \Big) \Bigg) \cdot l(i) \\[-5pt]
&+\sum_{i=1}^m
\Bigg( P_{\Sigma_1}(2i) + \Gamma \cdot l(2i) - \sum_{p=1}^i \Big( l(i-p) \cdot l(i+p)
- \sum_{q=0}^{2i-2p-1} s_1(q,2i) \Big) \Bigg) \cdot l(3m-2i+2) \\[-5pt]
&  +\sum_{i=1}^m \sum_{j=0}^{2i-1} \sum_{p=1}^{2i}  s_2(j, 2i-p, 3m+2)+  \sum_{i=1}^m \sum_{j=0}^{2i-1} \sum_{p=1}^j s_2(j-p, 2i+2p-1, 3m+2)\\[-5pt]
&   + \sum_{i=1}^m \sum_{j=0}^{2i-1} \sum_{k=0}^{3m-3i} \Bigg(\sum_{p=1}^{k+2i+2}  s_2(j, k+2i-p+2, 3m+2) + \sum_{p=1}^j s_2(j-p, k+2i+2p+1, 3m+2)\Bigg)~\quad~\\[-5pt]
& - \sum_{i=1}^m \sum_{j=0}^{2i-1} s_1(j, 3m-i+2) \cdot l(i)
-  \sum_{i=1}^m \sum_{j=0}^{2i-1} \sum_{k=0}^{3m-3i} s_1(j, 3m-i-k+1) \cdot  l(k+i+1).
\end{align*}
\end{footnotesize}
\end{enumerate}
\end{thm}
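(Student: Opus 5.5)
We propose to prove Theorem \ref{thm:main2} in three steps. First we reduce to finitely many residue classes of $\ell$ via Lagrange interpolation. Then we compute the Hilbert function on those classes through the secant bundle description of $\Sigma_k$. Finally we evaluate the Euler characteristics on $X^{[k+1]}$ by pushing forward to symmetric products and blow-ups.

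Since $\dim \Sigma_1 = 2n+1$ and $\dim \Sigma_2 = 3n+2$, the Hilbert polynomial $P_{\Sigma_k}$ has degree $(k+1)n+k$. It is therefore determined by its values at the $(k+1)n+k+1$ points $\ell = (k+1)m + k$ for $0\le m\le (k+1)n+k$. The products in the statement are exactly the Lagrange basis polynomials for these nodes. So it suffices to show:
\begin{itemize}
\item for $k=1$, that $P_{\Sigma_1}(2m+1)$ equals the bracketed expression;
\item for $k=2$, that $P_{\Sigma_2}(3m+2)$ is given by the displayed formula.
\end{itemize}
By Theorem \ref{thm:secantvariety} we have $P_{\Sigma_k}(\ell)=h^0(\mathcal{O}_{\Sigma_k}(\ell))$ for $\ell\ge1$. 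By Theorem \ref{thm:main1}, this in turn equals $h^0(X^{[k+1]}, S^\ell E_{k+1,L})$. We expect this to be computed as $\chi(S^\ell E_{k+1,L})$ minus the explicitly known higher cohomology, which is the source of the $\Gamma$-terms.

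For $k=1$ we would work on the blow-up $\bl\colon \widetilde{X\times X}\to X\times X$ along the diagonal, with exceptional divisor $D\cong \mathbb{P}(\Omega_X)$ (or $\mathbb{P}(T_X)$), and the double cover $\widetilde{X\times X}\to X^{[2]}$. The pullback of $E_{2,L}$ sits in the standard sequence
\[
0\to L(-D)\to \bl^*E_{2,L}\to L\to 0
\]
(with $L$ pulled back from either factor). Taking symmetric powers and the $\mathfrak{S}_2$-invariant part reduces $\chi(S^{2m+1}E_{2,L})$ to a sum of terms $\chi(L^{a}\boxtimes L^{b}(-iD))$. We handle these by restricting successively to $D$:
\[
\chi(\mathcal{O}(-iD))=\chi(\mathcal{O})-\sum_{j<i}\chi(\mathcal{O}_D(-jD)),
\]
and $\pi_*\mathcal{O}_D(j)=S^j\Omega_X$ on $D$. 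This produces $l(a)l(b)$ minus sums of $s_1(j,2m+1)$. The odd choice $\ell=2m+1$ is what makes the invariant/anti-invariant splitting pair up symmetrically into $l(2m+1)+\sum_{i=1}^m(\cdots)$.

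For $k=2$ we would do the analogous computation on $X^{[3]}$. We use the nested Hilbert scheme $X^{[2,3]}$, which is the blow-up of $X^{[2]}\times X$ along the universal family, together with its filtration
\[
0\to L(-F)\to \pr^*E_{3,L}\to E_{2,L}\to 0 .
\]
Expanding $S^{3m+2}$ along this filtration expresses $\chi$ in terms of the following pieces:
\begin{itemize}
\item $\chi(S^aE_{2,L})$ twisted by powers of $L$ on the extra factor, which give the $P_{\Sigma_1}$, $\Gamma\cdot l$ and $l\cdot l$ terms;
\item contributions supported on the exceptional divisor over the universal family, whose normal geometry over the diagonal strata yields $\Omega_X\otimes\Omega_X$ symmetric powers, hence the $s_2$ terms;
\item the correction terms $-s_1\cdot l$.
\end{itemize}

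The main obstacle is the $k=2$ bookkeeping. Specifically, we must control the $\mathfrak{S}_3$-invariants and the triple-point stratum, where the exceptional geometry is not a simple projective bundle. We would first try to avoid the triple locus entirely by using the residual exact sequences of the secant variety itself, namely $\Sigma_1\subset\Sigma_2$ and the ideal-theoretic description from Theorem \ref{thm:secantvariety}. This reduces $P_{\Sigma_2}(3m+2)-P_{\Sigma_1}(3m+2)$ to Euler characteristics on the blow-up of $\Sigma_2$ along $\Sigma_1$, which should fibre over $X^{[2]}\times X$–type data. We would then verify the final formula numerically against the curve case \cite[Theorem 4.2]{ENP2}.
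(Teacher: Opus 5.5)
Your reduction to the nodes $\ell=2m+1$ and $\ell=3m+2$ by Lagrange interpolation matches the paper. The gap is in how you compute the nodal values.

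\textbf{The case of $\Sigma_1$.} You propose to compute $\chi(X^{[2]},S^{2m+1}E_{2,L})$ by pulling back to $X^{[1,2]}=\mathrm{Bl}_\Delta(X\times X)$ and ``taking $\mathfrak{S}_2$-invariants'' of the filtration with graded pieces $(\tau^*L^j\otimes\res^*L^{\ell-j})(-jF_1)$. That filtration is not preserved by the involution $\iota$. Since $\iota$ swaps $\tau$ and $\res$, it carries $\tau^*L(-F_1)\subset\rho^*E_{2,L}$ to $\res^*L(-F_1)$, so there is no $\mathfrak{S}_2$-action on the graded pieces to take invariants of. What the filtration actually computes is
\[
\chi(X^{[1,2]},\rho^*G)=\chi(X^{[2]},G)+\chi(X^{[2]},G(-\delta_2)),
\]
because $\rho_*\cO_{X^{[1,2]}}=\cO_{X^{[2]}}\oplus\cO_{X^{[2]}}(-\delta_2)$. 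Isolating the invariant half would need a separate fixed-point computation along $F_1$, which you do not supply. The two halves also do not ``pair up'' for odd $\ell$. Combining the paper's formula with Theorem~\ref{thm:main1}(1) gives $\chi(\rho^*S^{2m+1}E_{2,L})-2\chi(S^{2m+1}E_{2,L})=-\sum_{p=0}^{m}s_1(2p,2m+1)$. For instance, for the twisted cubic with $m=1$ this is $24-40=-16$.

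\textbf{The case of $\Sigma_2$.} Here the approach is worse. The map $\rho\colon X^{[2,3]}\to X^{[3]}$ is generically finite of degree $3$ and is not a quotient by any group. Moreover $R\rho_*\cO_{X^{[2,3]}}=E_{3,\cO_X}$, so pulling back computes $\chi(X^{[3]},G\otimes E_{3,\cO_X})$, not $\chi(X^{[3]},G)$. Your account of the $\Gamma$-terms is also off. Subtracting the higher cohomology from Theorem~\ref{thm:main1}(2) introduces $\Gamma\cdot P_{\Sigma_1}(3m+2)$ and $(\chi(\cO_{X^{[2]}})-\chi(\cO_X))\,l(3m+2)$. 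The terms $\Gamma\cdot l(\cdot)\,l(\cdot)$ in the statement instead come from evaluating $\chi(S^aE_{2,L}\otimes A_{2,L}^b)$ on $X^{[2]}$ via Theorem~\ref{thm:main1}(1).

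\textbf{The missing idea.} Avoid Euler characteristics on $X^{[k]}$ altogether. Use the paper's indexing $\sigma_k=\Sigma_{k-1}$, $k=2,3$.
\begin{enumerate}
\item From $0\to\mathscr{I}_{\sigma_{k-1}/\sigma_k}\to\cO_{\sigma_k}\to\cO_{\sigma_{k-1}}\to0$ and Theorem~\ref{thm:secantvariety}(3), write $\chi(\mathscr{I}_{\sigma_{k-1}/\sigma_k}(\ell))=\chi(\cO_{B^k}(\ell H_k-Z^k_{k-1}))$.
\item Peel off copies of $Z^k_{k-1}$ using $\cO_{B^k}(-Z^k_{k-1})=\cO_{B^k}(-k)\otimes\pi_k^*A_{k,L}$.
\item At $\ell=km+k-1$ the last term is $\cO_{B^k}(-1)\otimes\pi_k^*A_{k,L}^{m+1}$. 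Its Euler characteristic vanishes because $R\pi_{k,*}\cO_{B^k}(-1)=0$. This, not an invariant/anti-invariant pairing, is why the nodes are $2m+1$ and $3m+2$.
\end{enumerate}
For $k=2$ the surviving terms are line bundles on $Z^2_1\cong X^{[1,2]}$. This sits inside $B^2$ as a divisor, so the double cover never has to be descended, and Lemma~\ref{lem:chi(L^aL^b(-cF))} finishes the computation.

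For $k=3$ you further need the following ingredients.
\begin{enumerate}
\item The sequence $0\to\alpha_{2,3,*}\cO_{B^{2,3}}(-\widetilde{\tau}^*Z^2_1)\to\cO_{Z^3_2}\to\cO_{Z^3_1}\to0$.
\item The blow-up $X^{[2,3]}=\mathrm{Bl}_{\mathcal{Z}_2}(X\times X^{[2]})$, with $\bl_*\cO_{F_2}(-jF_2)=S^jN^\vee$.
\item The filtration of $\rho^*S^aE_{2,L}$ induced by $0\to\tau^*L(-F_1)\to\rho^*E_{2,L}\to\res^*L\to0$.
\item The $\lambda$-ring identity for $[S^aN^\vee]$ coming from $0\to N^\vee\to\res^*\Omega_X\to\cO_{F_1}(-F_1)\to0$. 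This is what produces the $s_2$ terms.
\end{enumerate}

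Your fallback via $\Sigma_1\subset\Sigma_2$ and Theorem~\ref{thm:secantvariety} points in exactly this direction. As written, though, it stops at the idea and carries out none of these steps, which are where the formula actually comes from. A numerical check against the curve case is not a substitute.
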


This theorem is a higher dimensional generalization of \cite[Theorem 4.4]{ENP2}, which deals with the case where $X$ is a curve. 
As in the theorem, assume that $k=1$ or $2$ and $L$ is sufficiently positive. Let $B^{k+1}=B^{k+1}(L):=\mathbb{P}(E_{k+1,L})$. Note that $H^i(X^{[k+1]}, S^{\ell} E_{k+1,L}) = H^i(B^{k+1}, \mathcal{O}_{B^{k+1}}(\ell))$ for $i \geq 0$ and $\ell \geq 1$. The complete linear system $\lvert \mathcal{O}_{B^{k+1}}(1) \rvert$ induces a birational morphism $\alpha_{k+1} \colon B^{k+1} \to \Sigma_k$, which is a resolution of singularities. We review basic facts on secant varieties in Section \ref{sec:prelim}. If one can determine the higher direct images $R^i \alpha_{k+1,*} \mathcal{O}_{B^{k+1}}$ explicitly, then Theorem \ref{thm:main1} follows from the Leray spectral sequence for $\alpha_{k+1}$ together with known results for the cohomology of secant varieties. This is precisely what we do in Section \ref{sec:R^ialpha}. We also briefly discuss in Remark \ref{rem:surfacecase} how to extend Theorem \ref{thm:main1} when $X$ is a surface and $k$ is arbitrary. The idea of computing the cohomology groups of symmetric powers of the tautological bundles via secant varieties was already explored in \cite[Section 4]{ENP2} in the case where $X$ is a curve. In that case, since  $\deg P_{\Sigma_k}(\ell)=\dim \Sigma_k = 2k+1$, the result is obtained by applying Lagrange interpolation after computing $P_{\Sigma_k}(\ell)$ for $-k \leq \ell \leq k+1$. These values can be derived rather easily from the descriptions of $R^i \alpha_{k+1,*} \mathcal{O}_{B^{k+1}}$. However, in the higher dimensional cases, we need a new strategy (we instead compute $P_{\Sigma_k}(km+k-1)$ for all $m \geq 0$), and the arguments become substantially more involved. See Section \ref{sec:hilbpoly} for the details. Finally, we note that the degrees of $\Sigma_1$ and $\Sigma_2$ were known before (\cite[Corollary 8.2.9]{joins.1999} and \cite[Theorem 1.2]{Choi}, respectively). Our result may be regarded as a far-reaching generalization of those work.

\section{Preliminaries}\label{sec:prelim}
\noindent We begin with collecting necessary facts on Hilbert schemes of points and secant varieties. We closely follow the notations in \cite{Choi.Lacini.Park.Sheridan.25}, and we refer to that paper and the references therein for further details. Throughout the paper, $X$ is a smooth projective complex variety of dimension $n$, and $L$ is a sufficiently positive line bundle on $X$ giving an embedding $X \subseteq \mathbb{P} H^0(X, L) = \mathbb{P}^r$. 

\subsection{Hilbert schemes of points}
The \emph{Hilbert scheme} $X^{[k]}:= \{ \xi \subseteq X \mid \dim \xi=0, \operatorname{length}(\xi)=k\}$ of $k$ points on $X$ is smooth if and only if $n \leq 2$ or $k \leq 3$ (see \cite{Cheah}). In this paper, we always assume that $k \leq 3$ so that $X^{[k]}$ is a smooth projective complex variety of dimension $kn$. For a zero-dimensional subscheme $\xi \subseteq X$ of length $k$, we denote by $[\xi]$ the point of $X^{[k]}$ corresponding to $\xi$. The universal family 
$$
\mathcal{Z}_{k}:=\{ (x, [\xi]) \in X \times X^{[k]} \mid x \in \xi\} \subseteq X \times X^{[k]}
$$
over $X^{[k]}$ has rational singularities (see \cite[\S 1.2.4]{Choi.Lacini.Park.Sheridan.25}). Let 
$\pr_1 \colon \mathcal{Z}_{k} \to X$ and $\pr_2 \colon \mathcal{Z}_{k} \to X^{[k]}$ be the projection maps, and $E_{k,L}:=\pr_{2,*} \pr_1^* L$ be the \emph{tautological bundle} associated to $L$, which is a rank $k$ vector bundle on $X^{[k]}$ since $\pr_2$ is a finite flat morphism of degree $k$. 
Note that $H^0(X^{[k]}, E_{k,L}) = H^0(X, L)$ and the fiber of $E_{k,L}$ over $[\xi] \in X^{[k]}$ can be naturally identified with $H^0(\xi, L|_{\xi})$. In particular, $E_{k,L}$ is globally generated whenever $L$ is $(k-1)$-very ample, i.e., $H^0(X, L) \to H^0(\xi, L|_{\xi})$ is surjective for every $[\xi] \in X^{[k]}$. 

Next, we consider the \emph{nested Hilbert scheme}
$$
X^{[k-1,k]}:=\{ ([\eta], [\xi]) \in X^{[k-1]} \times X^{[k]} \mid \eta \subseteq \xi \} \subseteq X^{[k-1]} \times X^{[k]}
$$
with projection maps $\tau \colon X^{[k-1, k]} \to X^{[k-1]}$ and $\rho \colon X^{[k-1,k]} \to X^{[k]}$. There is a residual morphism $\res \colon X^{[k-1,k]} \to X$ sending $([\eta], [\xi])$ to $(\mathscr{I}_{\xi}: \mathscr{I}_{\eta})$. When $n \leq 2$ or $k \leq 3$, it is also known that $X^{[k-1,k]}$ is a smooth projective complex variety of dimension $kn$ (see \cite{Cheah}). The morhpism $\rho \colon X^{[k-1,k]} \to X^{[k]}$ is generically finite and factors through the universal family $\mathcal{Z}_k$. The morphism $X^{[k-1,k]} \to \mathcal{Z}_k$ is a resolution of singularities. On the other hand, $X^{[k-1,k]}$ can be obtained by the blow-up $\bl \colon X^{[k-1, k]} \to X \times X^{[k-1]}$ along the universal family $\mathcal{Z}_{k-1}$ with exceptional divisor $F_{k-1}$. Then $\res = \pr_1 \circ \bl$ and $\tau = \pr_2 \circ \bl$, where $\pr_1 \colon X \times X^{[k-1]} \to X$ and $\pr_2 \colon X \times X^{[k-1]} \to X^{[k-1]}$ are projection maps. Notice that $\mathcal{Z}_2 = X^{[1,2]}$ is obtained by the blow-up of $X \times X$ along the diagonal $\Delta \subseteq X \times X$. We have a short exact sequence
$$
0 \longrightarrow \res^* L(-F_{k-1}) \longrightarrow \rho^* E_{k,L} \longrightarrow \tau^* E_{k-1,L} \longrightarrow 0.
$$

Now, we consider the \emph{Hilbert--Chow morphism} $h_k \colon X^{[k]} \to X^{(k)}$. The $k$-th \emph{symmetric product} $X^{(k)}$ is obtained by the quotient of the ordinary $k$-th product $X^k$ by the symmetric group action $\mathfrak{S}_k$ permuting the components. The line bundle $L^{\boxtimes k}$ on $X^k$ descends to a line bundle $S_{k,L}$ on $X^{(k)}$. We denote by $T_{k,L}:=h_k^* S_{k,L}$. Then there exists a divisor $\delta_k$ on $X^{[k]}$ such that $N_{k,L}:=\det E_{k,L} = T_{k,L}(-\delta_k)$. Notice that $2\delta_k$ is the exceptional divisor of $h_k$ when $n \geq 2$. We set $A_{k,L}:=T_{k,L}(-2\delta_k)$. When $k=1$, we have $\delta_1=0$ and $E_{1,L}=T_{1,L}=N_{1,L}=A_{1,L}=L$. We refer to \cite[\S 2.3.2]{Choi.Lacini.Park.Sheridan.25} for basic properties of these line bundles on $X^{[k]}$. Here we recall that $\rho^* T_{k,L} = \tau^* T_{k-1,L} \otimes \res^* L$ and $\rho^* \delta_k = \tau^* \delta_{k-1} + F_{k-1}$. We can also compute the canonical line bundles $\omega_{X^{[k]}} = T_{k, \omega_X}((n-2)\delta_k)$ and $\omega_{X^{[k-1,k]}} = (\tau^* \omega_{X^{[k-1]}} \otimes \res^*\omega_X)((n-1)F_{k-1})$ (see \cite[\S 1.1.7 and \S 1.2.3]{Choi.Lacini.Park.Sheridan.25}). Notice that $F_{k-1}$ is the ramification divisor of $\rho$ and $\omega_{X^{[k-1,k]}} = \rho^* \omega_{X^{[k]}}(F_{k-1})$.

Finally, we show the following lemma, which is certainly known to experts. 

\begin{lem}\label{lem:H^i(O_X^[2])}
We have
$$
H^i(X^{[2]}, \mathcal{O}_{X^{[2]}}) = \Bigg( \bigoplus_{\substack{p+q=i \\ p<q}} H^p(X, \mathcal{O}_X) \otimes H^q(X, \mathcal{O}_X) \Bigg) \oplus 
\begin{cases} 
S^2 H^{i/2}(X, \mathcal{O}_X) & \text{if $i$ is even and $i/2$ is even} \\
\wedge^2 H^{i/2}(X, \mathcal{O}_X) & \text{if $i$ is even and $i/2$ is odd} \\
0 & \text{$i$ is odd}.
\end{cases}
$$
\end{lem}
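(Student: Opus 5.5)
The plan is to compute $H^i(X^{[2]},\mathcal{O}_{X^{[2]}})$ by pulling back along the double cover $\rho\colon X^{[1,2]}\to X^{[2]}$ and then using birational invariance of the cohomology of the structure sheaf. Recall that $X^{[1,2]}=\mathcal{Z}_2$ is the blow-up $\bl\colon X^{[1,2]}\to X\times X$ along the diagonal $\Delta$. The morphism $\rho$ is the quotient by the involution $\iota$ lifting the swap $\sigma(x,y)=(y,x)$. Since $\rho$ is finite flat of degree $2$, we have
$$\rho_*\mathcal{O}_{X^{[1,2]}}=\mathcal{O}_{X^{[2]}}\oplus M^{-1}$$
for a line bundle $M$, namely the anti-invariant part. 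It follows that $H^i(X^{[2]},\mathcal{O}_{X^{[2]}})$ is the $\iota$-invariant part of $H^i(X^{[1,2]},\mathcal{O}_{X^{[1,2]}})$. Over $\mathbb{C}$, taking invariants under a finite group is exact and commutes with cohomology.

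Next, since $\bl$ is the blow-up of a smooth variety along a smooth center, we have $R\bl_*\mathcal{O}_{X^{[1,2]}}=\mathcal{O}_{X\times X}$. Hence
$$H^i(X^{[1,2]},\mathcal{O})\cong H^i(X\times X,\mathcal{O}),$$
and this isomorphism is $\iota$/$\sigma$-equivariant because $\bl$ is equivariant. Künneth then gives
$$H^i(X\times X,\mathcal{O})=\bigoplus_{p+q=i}H^p(X,\mathcal{O}_X)\otimes H^q(X,\mathcal{O}_X).$$

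It remains to identify the $\sigma$-action on this decomposition. The swap exchanges the $(p,q)$ and $(q,p)$ summands, so for $p\neq q$ each pair contributes one invariant copy of $H^p\otimes H^q$. This accounts for the summand indexed by $p<q$. On the diagonal summand $H^{i/2}\otimes H^{i/2}$ (for $i$ even), $\sigma$ acts by $a\otimes b\mapsto (-1)^{(i/2)^2}\,b\otimes a=(-1)^{i/2}\,b\otimes a$. This is the Koszul sign rule for the cup product of pulled-back classes, since $\sigma^*(\pr_1^*a\cup\pr_2^*b)=\pr_2^*a\cup\pr_1^*b=(-1)^{pq}\,\pr_1^*b\cup\pr_2^*a$. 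The invariants are therefore $S^2H^{i/2}$ when $i/2$ is even and $\wedge^2H^{i/2}$ when $i/2$ is odd. For $i$ odd there is no diagonal term.

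The main obstacle is making the sign rigorous: the Künneth isomorphism must be compatible with the swap up to the graded-commutativity sign. I would argue via the cup product on sheaf cohomology, which is graded commutative. Alternatively, one can identify $H^q(X,\mathcal{O}_X)\cong\overline{H^0(X,\Omega^q_X)}$ through Hodge theory, where the sign is the usual one for forms of degree $q$.

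A second, minor point is the claim that $H^i(X^{[2]},\mathcal{O})$ equals the invariants. This follows from $\rho_*\mathcal{O}^{\iota}=\mathcal{O}_{X^{[2]}}$: the quotient $X^{[1,2]}/\iota$ is normal, and $\rho$ is finite of degree $2$ onto the normal variety $X^{[2]}$, so the quotient is $X^{[2]}$. One could equally phrase this via $X^{(2)}$, which has rational singularities resolved by $h_2$, so that $H^i(\mathcal{O}_{X^{[2]}})=H^i(\mathcal{O}_{X^{(2)}})=H^i(X\times X,\mathcal{O})^{\mathfrak{S}_2}$. I would likely present this second route, as it is shorter.
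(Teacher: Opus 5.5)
Your proposal is correct, and the route you say you would present is exactly the paper's argument: rational singularities of $X^{(2)}$ give $H^i(\mathcal{O}_{X^{[2]}})=H^i(\mathcal{O}_{X^{(2)}})=H^i(X\times X,\mathcal{O})^{\mathfrak{S}_2}$, followed by the Koszul-sign analysis of the swap on the K\"unneth decomposition (which the paper leaves implicit by deferring to a reference). Your first route, via the double cover $\rho\colon X^{[1,2]}\to X^{[2]}$ and $R\bl_*\mathcal{O}=\mathcal{O}$, is also valid and does not need the rational singularities of $X^{(2)}$.
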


\begin{proof}
As $X^{(2)} = X^2/\mathfrak{S}_2$ has rational singularities and $h_2 \colon X^{[2]} \to X^{(2)}$ is a resolution of singularities, $H^i(X^{[2]}, \mathcal{O}_{X^{[2]}}) = H^i(X^{(2)}, \mathcal{O}_{X^{(2)}})$ for $i \geq 0$. Note that $\mathcal{O}_{X^{(2)}}=S_{2, \mathcal{O}_X}$ is obtained by descending $\mathcal{O}_{X^2}$. As in \cite[Lemma 5.25]{Choi.Lacini.Park.Sheridan.25}, $H^i(X^{(2)}, \mathcal{O}_{X^{(2)}})$ can be computed by analyzing $\mathfrak{S}_2$-action on the K\"{u}nneth decomposition of $H^i(X^2, \mathcal{O}_{X^2})=\bigoplus_{p+q=i} H^p(X, \mathcal{O}_X) \oplus H^q(X, \mathcal{O}_X)$.
\end{proof}

\subsection{Secant varieties}
We keep assuming $k=2$ or $3$ so that $X^{[k]}$ and $X^{[k-1,k]}$ are smooth. Most of the results still holds when $n \leq 2$ or $k \leq 3$. Let $B^k=B^k(L):=\mathbb{P}(E_{k+1,L})$ with canonical projection $\pi_k \colon B^k \to X^{[k]}$, and $H_k$ be the tautological divisor on $B^k$ so that $\mathcal{O}_{B^k}(H_k) = \mathcal{O}_{B^k}(1)$. As $E_{k,L}$ is globally generated, the tautological line bundle $\mathcal{O}_{B^k}(1)$ is base point free. Note that $H^0(B^k, \mathcal{O}_{B^k}(1)) = H^0(X^{[k]}, E_{k,L})=H^0(X, L)$. Thus the complete linear system $\lvert \mathcal{O}_{B^k}(1) \rvert$ gives a morhpism $\alpha_k \colon B^k \to \mathbb{P} H^0(X, L) = \mathbb{P}^r$ whose image is the \emph{$k$-secant variety}
$$
\sigma_k=\sigma_k(X, L):=\overline{\bigcup_{[\xi] \in X^{[k]}} \langle \xi \rangle} \subseteq \mathbb{P}^r,
$$
which is nothing but the $(k-1)$-th secant variety $\Sigma_{k-1}=\Sigma_{k-1}(X, L)$. Two different conventions for secant varieties are both widely used in the literature. From now on, following \cite{Choi.Lacini.Park.Sheridan.25}, we use the notation of $k$-secant variety $\sigma_k$ simply because it is more convenient for maintaining consistent indexing. Note that $\alpha_k \colon B^k \to \sigma_k$ is a birational morphism hence a resolution of singularities and $\sigma_k$ is a projective variety with $\dim \sigma_k = kn+k-1$. We have $\operatorname{Sing}(\sigma_k)=\sigma_{k-1}$ (see \cite[Corollary E]{Choi.Lacini.Park.Sheridan.25}). Here, recall some of the main results of \cite{Choi.Lacini.Park.Sheridan.25} (see also \cite{Chou.Song.18} and \cite{Ullery} for the case of $k=2$). 

\begin{thm}[{\cite[Theorems 5.3 and 5.4]{Choi.Lacini.Park.Sheridan.25}}]\label{thm:secantvariety}
For $k=2$ or $3$, we have the following:
\begin{enumerate}
    \item $\sigma_k$ has normal singularities, and $\sigma_k \subseteq \mathbb{P}^r$ is projectively normal.
    \item $H^i(\sigma_k, \mathscr{I}_{\sigma_{k-1}/\sigma_k}(\ell)) = H^i(\sigma_k, \mathcal{O}_{\sigma_k}(\ell))=0$ for $i>0$ and $\ell > 0$. 
    \item $\alpha_{k,*} \mathcal{O}_{B^k}(-Z_{k-1}^k)=\mathscr{I}_{\sigma_{k-1}/\sigma_k}$ and $R^i \alpha_{k,*} \mathcal{O}_{B^k}(-Z_{k-1}^k)=0$ for $i>0$.
\end{enumerate}
\end{thm}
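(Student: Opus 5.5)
Since this theorem is quoted from \cite[Theorems 5.3 and 5.4]{Choi.Lacini.Park.Sheridan.25}, we only sketch how we would prove it. The argument is an induction on $k$. The base case is $\sigma_1 = X$, which is smooth. There $H^i(X, L^\ell)=0$ for $i>0$ and $\ell>0$ because $L$ is sufficiently positive, and $X \subseteq \mathbb{P}^r$ is projectively normal. We order the proof (3) $\Rightarrow$ (1) $\Rightarrow$ (2).

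The central object is the relative secant divisor $Z^k_{k-1} \subseteq B^k$, which is the locus of pairs $([\xi], v)$ with $v$ lying in the span $\langle \eta \rangle$ of a subscheme $\eta \subseteq \xi$ of length $k-1$.

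\emph{Step 1: the geometry of $\alpha_k$.}
\begin{enumerate}
\item We would first describe $Z^k_{k-1}$ as the image of a natural $\mathbb{P}$-bundle over the nested Hilbert scheme $X^{[k-1,k]}$. This uses the exact sequence $0 \to \res^*L(-F_{k-1}) \to \rho^*E_{k,L} \to \tau^*E_{k-1,L} \to 0$: the quotient gives $\mathbb{P}(\tau^*E_{k-1,L}) \subseteq \mathbb{P}(\rho^*E_{k,L})$, which maps onto $Z^k_{k-1}$.
\item Next we would check that $\alpha_k$ is an isomorphism over $\sigma_k \setminus \sigma_{k-1}$. For this we use that $L$ is $(2k-1)$-very ample, so two distinct length-$k$ subschemes span distinct $(k-1)$-planes meeting only inside lower secants.
\item Finally we would show that $\alpha_k^{-1}(\sigma_{k-1}) = Z^k_{k-1}$ set-theoretically. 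We would also identify the fibres over a point of $\sigma_{k-1}\setminus\sigma_{k-2}$. For $k=2$ such a fibre is $X$ blown up at a point. For $k=3$ it is the analogous space of length-$3$ schemes containing a fixed length-$2$ scheme.
\end{enumerate}
From this description we get that $\alpha_k$ has connected fibres, and that $-Z^k_{k-1}$ is $\alpha_k$-nef up to a twist by the canonical bundle.

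\emph{Step 2: the vanishing in (3).} Using $\omega_{B^k}$ computed from $\omega_{X^{[k]}} = T_{k,\omega_X}((n-2)\delta_k)$ and the formula for $\det E_{k,L}$, we would write
$$\mathcal{O}_{B^k}(-Z^k_{k-1}) = \omega_{B^k} \otimes \mathcal{M},$$
with $\mathcal{M}$ $\alpha_k$-nef and $\alpha_k$-big. Relative Kawamata--Viehweg then gives $R^i\alpha_{k,*}\mathcal{O}_{B^k}(-Z^k_{k-1})=0$ for $i>0$.

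\emph{Step 3: normality and the ideal.} We would next show $\alpha_{k,*}\mathcal{O}_{B^k}=\mathcal{O}_{\sigma_k}$. Given this, $\sigma_k$ is normal, and $\alpha_{k,*}\mathcal{O}(-Z^k_{k-1})$ is the ideal of the image $\alpha_k(Z^k_{k-1}) = \sigma_{k-1}$. To identify it with the reduced ideal $\mathscr{I}_{\sigma_{k-1}/\sigma_k}$, we would push forward $0\to \mathcal{O}(-Z)\to\mathcal{O}\to\mathcal{O}_Z\to 0$. By induction, $\alpha_{k,*}\mathcal{O}_Z=\mathcal{O}_{\sigma_{k-1}}$, since $Z$ fibres with connected fibres over a resolution of $\sigma_{k-1}$ and $\sigma_{k-1}$ is normal.

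\emph{Step 4: projective normality and the vanishing in (2).} For (2), we apply Leray together with Step 2. This reduces the vanishing of $H^i(\sigma_k,\mathscr{I}_{\sigma_{k-1}/\sigma_k}(\ell))$ to $H^i(B^k,\mathcal{O}(\ell H_k - Z))=0$. That group is $H^i(B^k, \omega_{B^k}\otimes(\text{nef and big}))$, so it vanishes by Kawamata--Viehweg. The vanishing for $\mathcal{O}_{\sigma_k}(\ell)$ then follows from the ideal sequence and the inductive statement for $\sigma_{k-1}$. Projective normality follows by the same induction. By the vanishing for $\mathscr{I}_{\sigma_{k-1}/\sigma_k}$ and the surjectivity $H^0(\mathcal{O}_{\mathbb{P}^r}(\ell))\to H^0(\mathcal{O}_{\sigma_{k-1}}(\ell))$ for $\sigma_{k-1}$, it suffices to show that $H^0(\mathscr{I}_{\sigma_{k-1}/\mathbb{P}^r}(\ell))\to H^0(\mathscr{I}_{\sigma_{k-1}/\sigma_k}(\ell))$ is surjective. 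This we would establish by a Koszul-type / symmetric-power argument on $H^0(X^{[k]},S^\ell E_{k,L}(-\text{secant divisor}))$.

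\emph{Main obstacle.} We expect the hardest parts to be the fibre analysis over $\sigma_{k-1}$ for $k=3$, which requires a non-reduced length-$3$ scheme analysis, and the identity $\alpha_{k,*}\mathcal{O}_{B^k}=\mathcal{O}_{\sigma_k}$, which amounts to normality. For the latter, the first thing we would try is the criterion: connected fibres together with the seminormality/Du Bois property of $\sigma_k$, obtained inductively from $\sigma_{k-1}$.
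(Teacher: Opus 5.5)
The paper gives no proof of this statement; it is quoted from Choi--Lacini--Park--Sheridan. So your sketch has to stand on its own, and its analytic core does not: the Kawamata--Viehweg decomposition in Steps 2 and 4 is false. Carry out exactly the computation you indicate, using $\omega_{B^k}=\mathcal{O}_{B^k}(-k)\otimes\pi_k^*(\omega_{X^{[k]}}\otimes\det E_{k,L})$, $\det E_{k,L}=T_{k,L}(-\delta_k)$, $\omega_{X^{[k]}}=T_{k,\omega_X}((n-2)\delta_k)$ and $\mathcal{O}_{B^k}(-Z^k_{k-1})=\mathcal{O}_{B^k}(-k)\otimes\pi_k^*A_{k,L}$. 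You get
\[
\mathcal{M}=\mathcal{O}_{B^k}(-Z^k_{k-1})\otimes\omega_{B^k}^{-1}=\pi_k^*\big(T_{k,\omega_X^{-1}}(-(n-1)\delta_k)\big),
\]
which is independent of $L$. Take a fibre $\alpha_2^{-1}(x)\cong\mathrm{Bl}_xX$, with $\beta\colon\mathrm{Bl}_xX\to X$ the blow-down; there $T_{2,M}$ restricts to $\beta^*M$ and $\delta_2$ to the exceptional divisor $E_x$. So $\mathcal{M}$ restricts to $\beta^*\omega_X^{-1}(-(n-1)E_x)=\omega_{\mathrm{Bl}_xX}^{-1}$, which is not nef in general. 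For a curve of genus $g\geq 2$ it has degree $2-2g<0$ on the contracted curve $\alpha_2^{-1}(x)\cong X$. Hence $\mathcal{M}$ is not $\alpha_k$-nef, and relative Kawamata--Viehweg does not apply. The same failure occurs in Step 4: $\mathcal{O}_{B^k}(\ell H_k-Z^k_{k-1})\otimes\omega_{B^k}^{-1}=\mathcal{O}_{B^k}(\ell H_k)\otimes\mathcal{M}$ is not nef, because $H_k$ is trivial on $\alpha_k$-fibres. Since $\mathcal{M}$ does not depend on $L$, no positivity hypothesis on $L$ repairs this. The vanishings in (2) and (3), which are the heart of the theorem, are therefore left unproved.

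The positivity has to come from $-Z^k_{k-1}$ itself: its restriction to $\alpha_2^{-1}(x)$ is $\beta^*L(-2E_x)$, which does grow with $L$. One possible way to exploit this is Kawamata--Viehweg for the pair $(B^k,cZ^k_{k-1})$ with $0<c<1$. The divisor that must be nef and big then becomes, additively, $\ell H_k-(1+c)Z^k_{k-1}-K_{B^k}=(\ell-ck)H_k+\pi_k^*\big(cA_{k,L}-T_{k,\omega_X}-(n-1)\delta_k\big)$, and this is plausibly nef and big for $\ell\geq k$ once $L$ is positive enough. For $1\leq\ell<k$ the group vanishes because $R\pi_{k,*}\mathcal{O}_{B^k}(\ell-k)=0$. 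The relative statement in (3) then follows from the case $\ell\gg0$ by Leray and Serre vanishing. This works for $k=2$, since $Z^2_1\cong X^{[1,2]}$ is smooth. For $k=3$ you would have to prove that $(B^3,cZ^3_2)$ is klt even though $Z^3_2$ is non-normal along $Z^3_1$, which is a genuinely new input. Alternatively, you could compute $H^i(X^{[k]},S^mE_{k,L}\otimes A_{k,L})$ directly on $X^{[k-1,k]}$.

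Separately, your route to normality is circular. The inductive Du Bois/seminormality criterion needs $R\alpha_{k,*}\mathcal{O}_{B^k}(-Z^k_{k-1})\simeq\mathscr{I}_{\sigma_{k-1}/\sigma_k}$, and you deduce its $H^0$-part in Step 3 from normality. The non-circular path is to show that $H^0(\mathbb{P}^r,\mathcal{O}(\ell))\to H^0(B^k,\mathcal{O}_{B^k}(\ell))$ is onto for all $\ell\geq 1$, which gives normality and projective normality at once. By induction and $H^1(B^k,\mathcal{O}(\ell H_k-Z^k_{k-1}))=0$, this reduces to surjectivity onto $H^0(B^k,\mathcal{O}(\ell H_k-Z^k_{k-1}))$. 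That is exactly the step you leave as an unspecified ``Koszul-type argument''.
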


\begin{rem}\label{rem:suffposcond}
In this paper, the assumption that the embedding line bundle $L$ is sufficiently positive is needed only when we apply Theorem \ref{thm:secantvariety}. Apart from this, most of the results introduced in this section only require that $L$ is $(2k-1)$-very ample. Recently, the authors of the present paper have proved that this theorem holds when $L=\omega_X \otimes A^m \otimes B$ with $m \geq 2n+2$ for $k=2$ and $m \geq 3n+3$ for $k=3$, where $A$ is very ample and $B$ is nef (see \cite{CP}). Therefore, the main results of this paper (Theorems \ref{thm:main1} and \ref{thm:main2}) remain valid under this assumption.
\end{rem}

Next, let $B^{k-1,k}=B^{k-1,k}(L):=\mathbb{P}(\tau^* E_{k-1,L})$ with canonical projection $\pi_{k-1,k} \colon B^{k-1,k} \to X^{[k]}$. We have a commutative diagram
$$
\xymatrix{
B^{k-1,k} \ar[r]^-{\widetilde{\tau}} \ar[d]_-{\pi_{k-1,k}} & B^{k-1} \ar[d]^-{\pi_{k-1}} \\
X^{[k-1,k]} \ar[r]_{\tau} & X^{[k-1]}.
}
$$
On the other hand, from the surjection $\rho^* E_{k,L} \to \tau^* E_{k-1,L}$, we get a morphism
$\alpha_{k-1,k} \colon B^{k-1,k} \to B^k$ whose image is denoted by $Z_{k-1}^k$. Note that $\alpha_{k-1,k} \colon B^{k-1,k} \to Z_{k-1}^k$ is a birational morphism hence a resolution of singularities and $Z_{k-1}^k:=\alpha_k^{-1}(\sigma_{k-1})$ is the exceptional divisor of $\alpha_k$. By \cite[Proposition 3.25]{Choi.Lacini.Park.Sheridan.25}, $\mathcal{O}_{B^3}(-Z_{k-1}^k) = \mathcal{O}_{B^k}(-k) \otimes \pi_k^* A_{k,L}$. There is a commutative diagram
$$
\xymatrix{
B^{k-1,k} \ar[r]^-{\alpha_{k-1,k}} \ar[d]_-{\pi_{k-1,k}} & Z_{k-1}^k \ar[d]^-{\pi_k} \\
X^{[k-1,k]} \ar[r]_{\rho} & X^{[k]}.
}
$$
Note that $\alpha_k \circ \alpha_{k-1,k} = \alpha_{k-1} \circ \widetilde{\tau}$ and $\alpha_k(\alpha_{k-1,k}(B^{k-1,k}))=\alpha_k(Z_{k-1}^k)=\sigma_{k-1}$. 
When $k=2$, we have $\mathcal{Z}_2 = X^{[1,2]} = B^{1,2} = Z_1^2$, which is the universal family over $X^{[2]}$ via $\pi_2$, and the restricted morphism $\alpha_2 \colon Z_1^2 \to X$ coincides with the residual morhpism $\res \colon X^{[1,2]} \to X$. For $k=3$, consider the divisor $\widetilde{\tau}^* Z_1^2$ on $B^{2,3}$. Let $Z_1^3:=\alpha_{2,3}(\widetilde{\tau}^*Z_1^2)$. Then $\alpha_3(Z_1^3)=X$ and $Z_1^3 = \alpha_3^{-1}(X)$. Note that $Z_1^3 = \mathcal{Z}_3$ is the universal family over $X^{[3]}$ via $\pi_3$ and the restricted morphism $\alpha_3 \colon Z_1^3 \to X$ coincides with the the projection map $\pr_1 \colon \mathcal{Z}_3 \to X$. Since $Z_2^3$ is a prime divisor on a smooth variety $B^3$, it is Gorenstein. We have $\omega_{B^{2,3}}(\widetilde{\tau}^* Z_1^2) = \alpha_{2,3}^* \omega_{Z_2^3}$. Observe that $\alpha_{2,3} \colon B^{2,3} \setminus \widetilde{\tau}^* Z_1^2 \to Z_2^3 \setminus Z_1^3$ is an isomorphism and $\alpha_{2,3} \colon \widetilde{\tau}^* Z_1^2 \to Z_1^3$ is a generically 2-to-1 map. It is easy to show that $\alpha_{2,3,*} \mathcal{O}_{B^{2,3}}(-\widetilde{\tau}^* Z_1^2) \rightarrow \mathcal{O}_{Z_2^3}$ is injective and the subscheme of $Z_2^3$ defined by $\alpha_{2,3,*} \mathcal{O}_{B^{2,3}}(-\widetilde{\tau}^* Z_1^2)$ is generically reduced. But $\alpha_{2,3,*} \mathcal{O}_{B^{2,3}}(-\widetilde{\tau}^* Z_1^2)$ and $\mathcal{O}_{Z_2^3}$ are Cohen--Macaulay, so $\alpha_{2,3,*} \mathcal{O}_{B^{2,3}}(-\widetilde{\tau}^* Z_1^2) = \mathscr{I}_{Z_1^3/Z_2^3}$ since the cosupport of $\alpha_{2,3,*} \mathcal{O}_{B^{2,3}}(-\widetilde{\tau}^* Z_1^2)$ is $Z_1^3$. 
Thus we have a short exact sequence
$$
0 \longrightarrow \alpha_{2,3,*} \mathcal{O}_{B^{2,3}}(-\widetilde{\tau}^* Z_1^2) \longrightarrow \mathcal{O}_{Z_2^3} \longrightarrow \mathcal{O}_{Z_1^3} \longrightarrow 0.
$$
Using Kawamata--Viehweg vanishing theorem, one can easily check $R^i \alpha_{2,3,*} \mathcal{O}_{B^{2,3}} = R^i \alpha_{2,3,*} \mathcal{O}_{B^{2,3}}(-\widetilde{\tau}^* Z_1^2) = 0$ for $i>0$. We refer to \cite[Sections 2 and 3]{Choi.Lacini.Park.Sheridan.25} for more details on $B^k$ and $B^{k-1,k}$.

\section{Cohomology groups of symmetric powers of tautological bundles}\label{sec:R^ialpha}

\noindent In this section, we prove Theorem \ref{thm:main1}. The main ingredient is the description of higher direct images $R^i \alpha_{k,*} \mathcal{O}_{B^k}$. The following proposition is a higher dimensional generalization of \cite[Theorem 4.2 (1)]{ENP2}, which deals with the case where $X$ is a curve. When $i=0$, we have $\alpha_{k,*} \mathcal{O}_{B^k} = \mathcal{O}_{\sigma_k}$ for $k=2$ and $3$ by Theorem \ref{thm:secantvariety} $(1)$.

\begin{prop}\label{prop:R^ialpha_*O_B}
Assume that $L$ is sufficiently positive. For $i \geq 1$, we have the following:
\begin{enumerate}
    \item $R^i \alpha_{2,*} \mathcal{O}_{B^2} = H^i(X, \mathcal{O}_X) \otimes \mathcal{O}_X$. In particular, $R^i \alpha_{2,*} \mathcal{O}_{B^2} = 0$ for $i \geq n+1$.
    \item $R^i \alpha_{3,*} \mathcal{O}_{B^3} = \big(H^i(X, \mathcal{O}_X) \otimes H^0(\Sigma_1, \mathcal{O}_{\Sigma_1}(\ell))\big) \oplus \big(H^i(X^{[2]}, \mathcal{O}_{X^{[2]}})/H^i(X, \mathcal{O}_X) \otimes H^0(X, \mathcal{O}_X(\ell))\big)$.
    In particular, $R^i \alpha_{3,*} \mathcal{O}_{B^3} = H^i(X^{[2]}, \mathcal{O}_{X^{[2]}}) \otimes \mathcal{O}_X$ for $n+1 \leq i \leq 2n$, and $R^i \alpha_{3,*} \mathcal{O}_{B^3} =0$ for $i \geq 2n+1$.
\end{enumerate}
\end{prop}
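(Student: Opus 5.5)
By Theorem \ref{thm:secantvariety}~(3), $\alpha_{k,*}\mathcal{O}_{B^k}(-Z_{k-1}^k)=\mathscr{I}_{\sigma_{k-1}/\sigma_k}$ and its higher direct images vanish. So pushing forward $0\to\mathcal{O}_{B^k}(-Z^k_{k-1})\to\mathcal{O}_{B^k}\to\mathcal{O}_{Z^k_{k-1}}\to 0$ by $\alpha_k$ gives
$$R^i\alpha_{k,*}\mathcal{O}_{B^k}\cong R^i\alpha_{k,*}\mathcal{O}_{Z^k_{k-1}} \quad \text{for } i\ge 1.$$
Everything is therefore reduced to computing higher direct images from the exceptional divisor. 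I read the right-hand side of (2) sheaf-theoretically: the first summand as $H^i(X,\mathcal{O}_X)\otimes\mathcal{O}_{\sigma_2}$, the second as $\big(H^i(X^{[2]},\mathcal{O}_{X^{[2]}})/H^i(X,\mathcal{O}_X)\big)\otimes\mathcal{O}_X$. This is the form that yields Theorem \ref{thm:main1} after twisting by $\mathcal{O}(\ell)$ and applying the Leray spectral sequence.

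For $k=2$, $Z_1^2=X^{[1,2]}=\bl_\Delta(X\times X)$, and $\alpha_2|_{Z_1^2}=\res=\pr_1\circ\bl$.
\begin{itemize}
\item Since $R\,\bl_*\mathcal{O}=\mathcal{O}_{X\times X}$ (blow-up of a smooth center), $R^i\res_*\mathcal{O}=R^i\pr_{1,*}\mathcal{O}_{X\times X}$.
\item By flat base change, $R^i\pr_{1,*}\mathcal{O}_{X\times X}=H^i(X,\mathcal{O}_X)\otimes\mathcal{O}_X$.
\end{itemize}
This proves (1), and the vanishing for $i\ge n+1$ is immediate.

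For $k=3$, I push forward the exact sequence $0\to\alpha_{2,3,*}\mathcal{O}_{B^{2,3}}(-\widetilde\tau^*Z_1^2)\to\mathcal{O}_{Z_2^3}\to\mathcal{O}_{Z_1^3}\to0$ by $\alpha_3$.

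\emph{The third term.} $Z_1^3=\mathcal{Z}_3$ has rational singularities, and $X^{[2,3]}=\bl_{\mathcal{Z}_2}(X\times X^{[2]})\to\mathcal{Z}_3$ is a resolution on which $\alpha_3$ becomes $\res=\pr_1\circ\bl$. Hence
$$R^i\alpha_{3,*}\mathcal{O}_{Z_1^3}=H^i(X^{[2]},\mathcal{O}_{X^{[2]}})\otimes\mathcal{O}_X.$$

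\emph{The first term.} Since $R^{>0}\alpha_{2,3,*}$ of this sheaf vanishes, I compute
$$R(\alpha_3\circ\alpha_{2,3})_*\mathcal{O}_{B^{2,3}}(-\widetilde\tau^*Z_1^2)=R(\alpha_2\circ\widetilde\tau)_*\,\widetilde\tau^*\mathcal{O}_{B^2}(-Z_1^2).$$
\begin{itemize}
\item $\tau=\pr_2\circ\bl$ satisfies $R^j\tau_*\mathcal{O}=H^j(X,\mathcal{O}_X)\otimes\mathcal{O}_{X^{[2]}}$. By flat base change and the projection formula, $R^j\widetilde\tau_*\widetilde\tau^*\mathcal{O}_{B^2}(-Z_1^2)=H^j(X,\mathcal{O}_X)\otimes\mathcal{O}_{B^2}(-Z_1^2)$.
\item Applying Theorem \ref{thm:secantvariety}~(3) for $k=2$ then gives $H^i(X,\mathcal{O}_X)\otimes\mathscr{I}_{X/\sigma_2}$ in degree $i$.
\end{itemize}

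This yields a long exact sequence
$$\cdots\to H^i(X,\mathcal{O}_X)\otimes\mathscr{I}_{X/\sigma_2}\to R^i\alpha_{3,*}\mathcal{O}_{B^3}\to H^i(X^{[2]},\mathcal{O}_{X^{[2]}})\otimes\mathcal{O}_X\xrightarrow{\ \partial\ } H^{i+1}(X,\mathcal{O}_X)\otimes\mathscr{I}_{X/\sigma_2}\to\cdots.$$
The statement follows once two things are established.
\begin{enumerate}
\item $\partial=0$. This should be automatic: $\partial$ is a map from a sheaf supported on $X$ into a torsion-free sheaf on $\sigma_2$ (a sum of copies of an ideal sheaf on the integral variety $\sigma_2$), so it must vanish.
\item The extension
$$0\to H^i(X,\mathcal{O}_X)\otimes\mathscr{I}_{X/\sigma_2}\to R^i\alpha_{3,*}\mathcal{O}_{B^3}\to H^i(X^{[2]},\mathcal{O}_{X^{[2]}})\otimes\mathcal{O}_X\to 0$$
glues the subspace $H^i(X,\mathcal{O}_X)\otimes\mathcal{O}_X$ (via the pullback $H^i(X,\mathcal{O}_X)\hookrightarrow H^i(X^{[2]},\mathcal{O}_{X^{[2]}})$, which is split by Lemma \ref{lem:H^i(O_X^[2])}) to $H^i(X,\mathcal{O}_X)\otimes\mathcal{O}_{\sigma_2}$, while the complement splits off.
\end{enumerate}

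Step (2) is the main obstacle. To handle it, I would construct a natural map $H^i(X,\mathcal{O}_X)\otimes\mathcal{O}_{\sigma_2}\to R^i\alpha_{3,*}\mathcal{O}_{Z_2^3}$, using the $\res$-type morphism from $B^{2,3}$ (through $X^{[2,3]}$) to $X$ and the compatibility $\alpha_3\circ\alpha_{2,3}=\alpha_2\circ\widetilde\tau$. I would then check that this map restricts to the inclusion on $\mathscr{I}_{X/\sigma_2}$ and lifts the summand $H^i(X,\mathcal{O}_X)\otimes\mathcal{O}_X$. Exactness then splits the sequence as claimed, and the ranges $n+1\le i\le 2n$ and $i\ge 2n+1$ follow from $H^i(X,\mathcal{O}_X)=0$ for $i>n$ and $\dim X^{[2]}=2n$.
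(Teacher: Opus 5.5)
Your reductions are sound and agree with the paper. Part (1) is exactly the paper's argument. Your computations of $R^i\alpha_{3,*}$ of the two outer terms of $0\to\alpha_{2,3,*}\mathcal{O}_{B^{2,3}}(-\widetilde{\tau}^*Z_1^2)\to\mathcal{O}_{Z_2^3}\to\mathcal{O}_{Z_1^3}\to 0$ are also the paper's. Your proof that $\partial=0$ (an $\mathcal{O}_{\sigma_2}$-linear map from a sheaf supported on $X$ into a torsion-free $\mathcal{O}_{\sigma_2}$-module) is correct, and it is a clean substitute for the paper's injectivity argument.

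The gap is your step (2). All the content of part (2) lies there, and you only announce it. Moreover, the construction you suggest does not work as stated. The morphism $B^{2,3}\to X^{[2,3]}\xrightarrow{\res} X$ does not factor through $Z_2^3$: over $Z_1^3$ the map $\alpha_{2,3}$ is generically $2$-to-$1$, and the two preimages have different residual points. So pulling back $H^i(X,\mathcal{O}_X)$ along it yields classes in $R^i\alpha_{3,*}\alpha_{2,3,*}\mathcal{O}_{B^{2,3}}$, not in $R^i\alpha_{3,*}\mathcal{O}_{Z_2^3}$. In fact a class $\res^*c\cdot f$ with $c\neq 0$ and $f|_X\neq 0$ does not come from $R^i\alpha_{3,*}\mathcal{O}_{Z_2^3}$ as it stands. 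Its image over $X$ lies in the summand $H^i(X,\mathcal{O}_X)\otimes H^0(X,\mathcal{O}_X)$ of $H^i(X^2,\mathcal{O}_{X^2})$, which is not $\mathfrak{S}_2$-invariant. It must be corrected by the swapped class in $H^0\otimes H^i$.

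The missing idea is to compare $\mathcal{O}_{Z_2^3}$ with $\alpha_{2,3,*}\mathcal{O}_{B^{2,3}}$. Map the conductor sequence above to $0\to\alpha_{2,3,*}\mathcal{O}_{B^{2,3}}(-\widetilde{\tau}^*Z_1^2)\to\alpha_{2,3,*}\mathcal{O}_{B^{2,3}}\to\alpha_{2,3,*}\mathcal{O}_{\widetilde{\tau}^*Z_1^2}\to 0$, with the identity on the left term. Then compute $R^i\alpha_{3,*}$ of this second row using three inputs: the relation $\alpha_3\circ\alpha_{2,3}=\alpha_2\circ\widetilde{\tau}$, the K\"unneth splitting $R\widetilde{\tau}_*\mathcal{O}_{B^{2,3}}\cong\bigoplus_q H^q(X,\mathcal{O}_X)\otimes\mathcal{O}_{B^2}[-q]$, and part (1). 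This gives a short exact sequence
\[
0\to H^i(X,\mathcal{O}_X)\otimes\mathscr{I}_{X/\sigma_2}\to \big(H^i(X,\mathcal{O}_X)\otimes\mathcal{O}_{\sigma_2}\big)\oplus\big(H^i(X^2,\mathcal{O}_{X^2})/H^i(X,\mathcal{O}_X)\otimes\mathcal{O}_X\big)\to H^i(X^2,\mathcal{O}_{X^2})\otimes\mathcal{O}_X\to 0,
\]
in which the summand $H^i(X,\mathcal{O}_X)\otimes\mathcal{O}_{\sigma_2}$ is visible. The right vertical map is the split inclusion $H^i(X^{[2]},\mathcal{O}_{X^{[2]}})\otimes\mathcal{O}_X\hookrightarrow H^i(X^2,\mathcal{O}_{X^2})\otimes\mathcal{O}_X$, given by pullback along the double cover.

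A diagram chase then identifies $R^i\alpha_{3,*}\mathcal{O}_{Z_2^3}$ with the preimage of $H^i(X^{[2]},\mathcal{O}_{X^{[2]}})\otimes\mathcal{O}_X$ in the middle term. In that preimage, the $H^0\otimes H^i$ component is forced to be the swap of the restriction to $X$ of the $\mathcal{O}_{\sigma_2}$ component. So the preimage is exactly $\big(H^i(X,\mathcal{O}_X)\otimes\mathcal{O}_{\sigma_2}\big)\oplus\big((H^i(X^{[2]},\mathcal{O}_{X^{[2]}})/H^i(X,\mathcal{O}_X))\otimes\mathcal{O}_X\big)$. This comparison also gives $\partial=0$ for free. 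Without some such comparison, your step (2) remains an assertion rather than a proof.
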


\begin{proof}
$(1)$ We have a short exact sequence
$$
0 \longrightarrow \mathcal{O}_{B^2}(-Z_1^2) \longrightarrow \mathcal{O}_{B^2} \longrightarrow \mathcal{O}_{Z_1^2} \longrightarrow 0.
$$
By Theorem \ref{thm:secantvariety} (3), $R^i \alpha_{2,*} \mathcal{O}_{B^2}(-Z_1^2) = 0$ for $i \geq 1$. Thus $R^i \alpha_{2, *} \mathcal{O}_{B^2} = R^i \alpha_{2, *} \mathcal{O}_{Z_1^2}$ for $i \geq 1$. Recall that $Z_1^2 = \mathcal{Z}_2$ is the universal family over $X^{[2]}$ and $\alpha_2|_{Z_1^2} \colon Z_1^2 \to X$ coincides with the projection map $\pr_1 \colon \mathcal{Z}_2 \to X$. Since $\mathcal{Z}_2$ is the blow-up of $X \times X$ along the diagonal $\Delta \subseteq X \times X$, it follows that $R^i \alpha_{2, *} \mathcal{O}_{B^2}  = R^i \pr_{1, *} \mathcal{O}_{\mathcal{Z}_2} = H^i(X, \mathcal{O}_X) \otimes \mathcal{O}_X$ for $i \geq 0$. 

\medskip

\noindent $(2)$ We have a short exact sequence
$$
0 \longrightarrow \mathcal{O}_{B^3}(-Z_2^3) \longrightarrow \mathcal{O}_{B^3} \longrightarrow \mathcal{O}_{Z_2^3} \longrightarrow 0.
$$
We fix $i \geq 1$. 
By Theorem \ref{thm:secantvariety} (3), $R^i \alpha_{3,*} \mathcal{O}_{B^3}(-Z_2^3) = 0$, so $R^i \alpha_{3, *} \mathcal{O}_{B^3} = R^i \alpha_{3, *} \mathcal{O}_{Z_2^3}$. 
Consider the commutative diagram 
$$
\xymatrix{
0 \ar[r] & \alpha_{2,3,*} \mathcal{O}_{B^{2,3}}(-\widetilde{\tau}^* Z_1^2) \ar[r] \ar@{=}[d] &  \mathcal{O}_{Z_2^3} \ar[r] \ar[d] & \mathcal{O}_{Z_1^3}  \ar[r] \ar[d] & 0 \\
0 \ar[r] & \alpha_{2,3,*} \mathcal{O}_{B^{2,3}}(-\widetilde{\tau}^* Z_1^2) \ar[r] & \alpha_{2,3,*} \mathcal{O}_{B^{2,3}} \ar[r] & \alpha_{2,3,*} \mathcal{O}_{\widetilde{\tau}^* Z_1^2} \ar[r] & 0,
}
$$
where two rows are short exact sequences. Notice that $R^j \alpha_{2,3,*} \mathcal{O}_{B^{2,3}}(-\widetilde{\tau}^*Z_{1^2}) = 0$ and $R^j \alpha_{2,3,*} \mathcal{O}_{B^{2,3}} = 0$ for all $j>0$. As $\alpha_3 \circ \alpha_{2,3} = \alpha_2 \circ \widetilde{\tau}$, we obtain a short exact sequence
$$
0 \longrightarrow R^i \alpha_{3,*} \alpha_{2,3,*} \mathcal{O}_{B^{2,3}}(-\widetilde{\tau}^* Z_1^2) \longrightarrow R^i \alpha_{3,*} \alpha_{2,3,*} \mathcal{O}_{B^{2,3}} \longrightarrow R^i \alpha_{3,*} \alpha_{2,3,*} \mathcal{O}_{\widetilde{\tau}^* Z_1^2} \longrightarrow 0,
$$
which can be identified with 
$$
0 \rightarrow H^i(X, \mathcal{O}_X) \otimes \mathscr{I}_{X/\sigma_2} \rightarrow \big(H^i(X, \mathcal{O}_X) \otimes \mathcal{O}_{\sigma_2} \big) \oplus \big( H^i(X^2, \mathcal{O}_{X^2})/H^i(X, \mathcal{O}_X) \otimes \mathcal{O}_X \big) \rightarrow H^i(X^2, \mathcal{O}_{X^2}) \otimes \mathcal{O}_X \rightarrow 0.
$$
Now, recall that $Z_1^3 = \mathcal{Z}_3$ is the universal family over $X^{[3]}$ and $\alpha_3|_{Z_1^3} \to X$ coincides with the projection map $\pr_1 \colon \mathcal{Z}_3 \to X$. 
There is a birational morphism $X^{[2,3]} \to \mathcal{Z}_3$ and $X^{[2,3]}$ is the blow-up of $X \times X^{[2]}$ along $\mathcal{Z}_2$. Thus $R^i \alpha_{3,*} \mathcal{O}_{Z_1^3} = H^i(X^{[2]}, \mathcal{O}_{X^{[2]}}) \otimes \mathcal{O}_X$, which is a direct summand of $H^i(X^2, \mathcal{O}_{X^2}) \otimes \mathcal{O}_X = R^i \alpha_{2,3,*} \mathcal{O}_{\widetilde{\tau}^* Z_1^2}$. Since the map $R^j \alpha_{3,*} \alpha_{2,3,*} \mathcal{O}_{B^{2,3}}(-\widetilde{\tau}^* Z_1^2) \longrightarrow R^j \alpha_{3,*} \mathcal{O}_{Z_2^3}$ is always injective for $j \geq 1$, we get a short exact sequence
$$
0 \longrightarrow R^i \alpha_{3,*} \alpha_{2,3,*} \mathcal{O}_{B^{2,3}}(-\widetilde{\tau}^* Z_1^2) \longrightarrow R^i \alpha_{3,*} \mathcal{O}_{Z_2^3} \longrightarrow R^i \alpha_{3,*} \mathcal{O}_{Z_1^3} \longrightarrow 0,
$$
and we also see that $R^i \alpha_{3,*} \mathcal{O}_{Z_2^3}$ is a direct summand of $R^i \alpha_{3,*} \alpha_{2,3,*} \mathcal{O}_{B^{2,3}}$. This shows the assertion $(2)$. 
\end{proof}

We are ready to give the proof of Theorem \ref{thm:main1}.

\begin{proof}[Proof of Theorem \ref{thm:main1}]
$(1)$ Since $B^2 = \mathbb{P}(E_{2,L})$, we have $H^i(X^{[2]}, S^{\ell} E_{2,L}) = H^i(B^2, \mathcal{O}_{B^2}(\ell))$ for $\ell \geq 1$. Note that $\mathcal{O}_{B^2}(\ell) = \alpha_2^* \mathcal{O}_{\sigma_2}(\ell)$. Recall from Theorem \ref{thm:secantvariety} (2) that $H^i(X, \mathcal{O}_{X}(\ell)) = 0$ and $H^i(\sigma_2, \mathcal{O}_{\sigma_2}(\ell))=0$ for $i>0$. As $\alpha_{2,*} \mathcal{O}_{B^2} = \mathcal{O}_{\sigma_2}$, we find $H^0(B^2, \mathcal{O}_{B^2}(\ell)) = H^0(\sigma_2, \mathcal{O}_{\sigma_2}(\ell))$. For $i \geq 1$, we have $R^i \alpha_{2,*} \mathcal{O}_{B^2} = H^i(X, \mathcal{O}_X) \otimes \mathcal{O}_X$ for $i \geq 1$ by Proposition \ref{prop:R^ialpha_*O_B}. Then the Leray spectral sequence for $\alpha_2$ shows that $H^i(B^2, \mathcal{O}_{B^2}(\ell)) = H^i(X, \mathcal{O}_X) \otimes H^0(X, \mathcal{O}_X(\ell))$.

\medskip

\noindent $(2)$ We have $B^3 = \mathbb{P}(E_{3,L})$, and we need to determine $H^i(B^3, \mathcal{O}_{B^3}(\ell))$ for $\ell \geq 1$. Note that $H^i(\sigma_3, \mathcal{O}_{\sigma_3}(\ell)) = 0$ for $i>0$ by Theorem \ref{thm:secantvariety} (2). We can conclude the assertion $(2)$ by applying Proposition \ref{prop:R^ialpha_*O_B} and the Leray spectral sequence for $\alpha_3$ in the same way as in $(1)$.
\end{proof}

\begin{rem}\label{rem:surfacecase}
Suppose that $X$ is a smooth projective complex surface and $L$ is sufficiently positive. In this case, Theorem \ref{thm:secantvariety} still holds for arbitrary $k$. Furthermore, $\sigma_k=\sigma_k(X, L)$ is Cohen--Macaulay if and only if $q(X)=h^1(X, \mathcal{O}_X)=0$ (in this case, $\sigma_k \subseteq \mathbb{P}^r$ is arithmetically Cohen--Macaulay so that $H^i(\sigma_k, \mathcal{O}_{\sigma_k}(\ell))=0$ for $1 \leq i \leq \dim \sigma_k-1 = 3k-2$ and $\ell \in \mathbb{Z}$), and $\sigma_k$ has rational singularities if and only if $q(X)=h^1(X, \mathcal{O}_X)=0$ and $p_g(X)=h^2(X, \mathcal{O}_X)=0$ (see \cite[Theorem A]{Choi.Lacini.Park.Sheridan.25}).
When $\sigma_k$ has rational singularities (i.e., $H^1(X, \mathcal{O}_X)=H^2(X, \mathcal{O}_X)=0$), we have $R^i \alpha_{k,*} \mathcal{O}_{B^k} = 0$ for $i >0$ so that $H^i(X^{[k]}, S^{\ell} E_{k,L}) = H^i(B^k, \mathcal{O}_{B^k}(\ell)) = 0$ for $i \geq 1$ and $\ell \geq 1$.
When $\sigma_k$ is Cohen--Macaulay (i.e., $H^1(X, \mathcal{O}_X=0$), we \emph{expect} that
$$
R^i \alpha_{k,*} \cO_{B^k} = \begin{cases} S^{i/2} H^2(X, \mathcal{O}_X)  \otimes \mathcal{O}_{\sigma_{k-i/2}} & \text{if $i$ is even and $0 \leq i \leq 2k-2$} \\
0 & \text{otherwise},\end{cases}
$$
which would imply that
$$
H^i(X^{[k]}, S^{\ell} E_{k,L}) = H^i(B^k, \mathcal{O}_{B^k}(\ell)) = \begin{cases} S^{i/2} H^2(X, \mathcal{O}_X)  \otimes H^0(\sigma_{k-i/2}, \mathcal{O}_{\sigma_{k-i/2}}(\ell)) & \text{if $i$ is even and $0 \leq i \leq 2k-2$} \\
0 & \text{otherwise} \end{cases}
$$
for $\ell \geq 1$. 
\end{rem}

\section{Hilbert polynomials of secant varieties}\label{sec:hilbpoly}

\noindent This section is devoted to the proof of Theorem \ref{thm:main2}. Throughout the section, we keep assuming that $L$ is a sufficiently ample line bundle on a smooth projective complex variety $X$ of dimension $n$. 

\subsection{Hilbert polynomial of $\sigma_2$}
We want to compute $P_{\sigma_2}(\ell) = \chi(\mathcal{O}_{\sigma_2}(\ell))$ for $\ell \in \mathbb{Z}$. We begin with the short exact sequence
\[
0 \longrightarrow \mathscr{I}_{X/\sigma_2} \longrightarrow \cO_{\sigma_2} \longrightarrow \cO_{X} \longrightarrow 0.
\]
We find
\[
\chi( \cO_{\sigma_2}(\ell)) = \chi(\cO_X(\ell)) + \chi (\mathscr{I}_{X/\sigma_2}(\ell)).
\] 
By Theorem \ref{thm:secantvariety} (3), we have 
\[
\chi (\mathscr{I}_{X/\sigma_2}(\ell)) = \chi(\mathcal{O}_{B^2}(\ell H_2 - Z_1^2)).
\]
Recall that $\mathcal{O}_{B^2}(-Z_1^2) = \mathcal{O}_{B^2}(-2) \otimes \pi_2^* A_{2,L}$. For an integer $i \geq 1$, we have a short exact sequence 
\[
0 \longrightarrow \cO_{B^2}(\ell H_2-(i+1)Z_1^2)
  \longrightarrow \cO_{B^2}(\ell H_2 -iZ_1^2)
  \longrightarrow 
  \cO_{Z_1^2}(\ell H_2 -iZ_1^2).
  \longrightarrow 0.
\]
Now, assume $\ell = 2m+1$ for some integer $m \geq 0$. As $\cO_{B^2}( \ell H_2 - (m+1)Z_1^2) = \cO_{B^3}(-1) \otimes \pi_2^* A_{2,L}^{m+1}$, we get $\chi( \cO_{B^2}( \ell H_2 - (m+1)Z_1^2) )=0$. Considering the above short exact sequences for $i=1, \ldots, m$, we see that
\[
\chi(\mathcal{O}_{B^2}(\ell H_2 - Z_1^2)) = \sum_{i=1}^m \chi (\cO_{Z_1^2}(\ell H_2 -iZ_1^2) ).
\]
Recall that $Z_1^2 = X^{[1,2]}$. Then we may write
\[
\cO_{Z_1^2}(\ell H_2 -iZ_1^2) =  \rho^* A_{2,L}^i \otimes \res^* L^{\ell - 2i} = (\tau^* L^i \otimes \res^* L^{\ell-i})(-2iF_1).
\]
By Lemma \ref{lem:chi(L^aL^b(-cF))} shown below, we have
\[
\chi\big( (\tau^* L^i \otimes \res^* L^{\ell-i})(-2iF_1) \big)  = \chi( L^{\ell-i}) \cdot \chi(L^i) - \sum_{j=0}^{2i-1} \chi(S^j \Omega_X \otimes L^{\ell}).
\]
Combining all together, we obtain
$$
\chi(\mathcal{O}_{\sigma_2}(2m+1)) = \chi(L^{2m+1}) + \sum_{i=1}^{m} \left(\chi( L^{2m-i+1}) \cdot \chi(L^i) - \sum_{j=0}^{2i-1} \chi(S^j \Omega_X \otimes L^{2m+1}) \right)
$$
for $m \geq 0$. Note that $P_{\sigma_2}(\ell) = \chi(\mathcal{O}_{\sigma_2}(\ell))$ is a polynomial in $\ell$ of degree $2n+1 = \dim \sigma_2$. By  Lagrange interpolation, 
$$
P_{\sigma_2}(\ell) = \sum_{m=0}^{2n+1} P_{\sigma_2}(2m+1) \prod_{\substack{t=0 \\ t \neq m}}^{2n+1} \frac{\ell - 2t-1}{2m - 2t}
$$
for any integer $\ell \in \mathbb{Z}$. To finish the proof of Theorem \ref{thm:main2} $(1)$, it only remain to show the following lemma.

\begin{lem}\label{lem:chi(L^aL^b(-cF))}
Let $F, G$ be vector bundles on $X$. 
For integers $a,b,c \geq 0$, we have
$$
\chi\big( (\tau^* (L^a \otimes F) \otimes \res^* (L^b \otimes G))(-cF_1) \big) = \chi(L^a \otimes F) \cdot \chi(L^b \otimes G) - \sum_{i=0}^{c-1} \chi(S^i \Omega_X \otimes F \otimes G  \otimes L^{a+b}).
$$
\end{lem}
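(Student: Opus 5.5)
Here $k=2$, so $X^{[1,2]}=\mathcal{Z}_2$ is the blow-up $\bl\colon X^{[1,2]}\to X\times X$ along the diagonal $\Delta$, with exceptional divisor $F_1$. Moreover $\tau=\pr_2\circ\bl$ and $\res=\pr_1\circ\bl$. Write $M:=\pr_2^*(L^a\otimes F)\otimes \pr_1^*(L^b\otimes G)$ on $X\times X$. The sheaf in question is then $\bl^*M(-cF_1)$, and the plan is to push it down to $X\times X$ and compute there.

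\textbf{Step 1: push forward to $X\times X$.} I will use the standard blow-up facts for a smooth center of codimension $n$: $\bl_*\mathcal{O}(-cF_1)=\mathscr{I}_\Delta^c$ and $R^j\bl_*\mathcal{O}(-cF_1)=0$ for $j>0$ and $c\geq 0$. For $c\le n-1$ this follows from the fibers of $F_1\to\Delta$ being $\mathbb{P}^{n-1}$ with $\mathcal{O}_{F_1}(-F_1)=\mathcal{O}(1)$, by induction on $c$ using
$$0\to\mathcal{O}(-(c+1)F_1)\to\mathcal{O}(-cF_1)\to\mathcal{O}_{F_1}(-cF_1)\to 0.$$
The same induction works for all $c\geq0$, since $\mathcal{O}_{\mathbb{P}^{n-1}}(c)$ has no higher cohomology. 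By the projection formula,
$$\chi\big(\bl^*M(-cF_1)\big)=\chi(M\otimes\mathscr{I}_\Delta^c).$$
The case $n=1$ needs a separate remark: there $\bl$ is an isomorphism and $F_1=\Delta$ is a divisor. The same identity still holds.

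\textbf{Step 2: filter by powers of the ideal.} The filtration $\mathscr{I}_\Delta^c\subseteq\cdots\subseteq\mathscr{I}_\Delta\subseteq\mathcal{O}$ has successive quotients $\mathscr{I}_\Delta^i/\mathscr{I}_\Delta^{i+1}\cong S^i N^\vee_{\Delta}\cong S^i\Omega_X$, supported on $\Delta\cong X$. This gives
$$\chi(M\otimes\mathscr{I}^c_\Delta)=\chi(M)-\sum_{i=0}^{c-1}\chi\big(M|_\Delta\otimes S^i\Omega_X\big).$$
Now $M|_\Delta=F\otimes G\otimes L^{a+b}$. By K\"unneth, $\chi(M)=\chi(L^a\otimes F)\cdot\chi(L^b\otimes G)$, which yields the formula.

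\textbf{Main obstacle.} The only real point is the vanishing of $R^j\bl_*\mathcal{O}(-cF_1)$ together with $\bl_*\mathcal{O}(-cF_1)=\mathscr{I}_\Delta^c$, for all $c\ge0$. Alternatively, I could avoid this step entirely by working on $X^{[1,2]}$ directly. One filters $\mathcal{O}(-cF_1)$ by $\mathcal{O}_{F_1}(-iF_1)$ for $i=0,\dots,c-1$, and notes that $F_1=\mathbb{P}(\Omega_X)$ (or $\mathbb{P}(T_X)$, depending on the convention). The pushforward of $\mathcal{O}_{F_1}(-iF_1)=\mathcal{O}_{\mathbb{P}}(i)$ to $X$ is $S^i\Omega_X$, with no higher direct images. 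This gives the same sum termwise.
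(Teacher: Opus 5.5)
Your proof is correct and is essentially the paper's argument. The paper filters $\mathcal{O}_{X^{[1,2]}}(-cF_1)$ by the pieces $\mathcal{O}_{F_1}(-iF_1)$, pushes each piece forward to $S^i\Omega_X$ using $\bl_*\mathcal{O}_{X^{[1,2]}}(-iF_1)=\mathscr{I}_{\Delta}^i$ and the vanishing of higher direct images (which it asserts as clear), and then concludes with K\"unneth. That is exactly your ``alternative'' route, while your main route performs the same filtration downstairs on $X\times X$ via $\mathscr{I}_\Delta^i/\mathscr{I}_\Delta^{i+1}\cong S^i\Omega_X$.

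One small wrinkle is in your justification of the vanishing. The caveat $c\le n-1$ is irrelevant. The induction on $c$ works either upward, using that $\mathscr{I}_\Delta^c\to\mathscr{I}_\Delta^c/\mathscr{I}_\Delta^{c+1}$ is surjective to kill $R^1$, or downward from relative Serre vanishing, since $-F_1$ is $\bl$-ample.
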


\begin{proof}
Recall that $X^{[1,2]}$ is obtained by the blow-up $\bl \colon X^{[1,2]} \to X \times X$ along the diagonal $\Delta \subseteq X \times X$ with exceptional divisor $F_1$. We have
\[
(\tau^* (L^a \otimes F) \otimes \res^* (L^b \otimes G) )(-cF_1) = \bl^*( (L^b \otimes F) \boxtimes  (L^a \otimes G) )(-cF_1).
\]
For an integer $i \geq 0$, we have a short exact sequence
\[
0 \longrightarrow \cO_{X^{[1,2]}}(-(i+1)F_1)
  \longrightarrow \cO_{X^{[1,2]}}(-i F_1)
  \longrightarrow \cO_{F_1}(-iF_i)
  \longrightarrow 0.
\]
Notice that $\bl_* \cO_{X^{[1,2]}}(-i F_1) = \mathscr{I}_{\Delta/X^2}^i$ and $\bl_* \cO_{F_1}(-iF_1) = S^i N^{\vee}_{\Delta/X^2} = S^i \Omega_X$. Clearly, $R^j \bl_* \cO_{X^{[1,2]}}(-i F_1) = 0$ and $R^j \bl_* \cO_{F_1}(-iF_1) = 0$ for $j>0$ and $i \geq 0$. Considering the above short exact sequences for $i=0, \ldots, c-1$ after tensoring by $\bl^*( (L^b \otimes F) \boxtimes  (L^a \otimes G) )$, we see that
\[
\chi\big( \bl^*( (L^b \otimes G) \boxtimes  (L^a \otimes F) )(-cF_1) \big) = \chi( (L^b  \otimes G) \boxtimes (L^a \otimes F) ) - \sum_{i=0}^{c-1} \chi(S^i \Omega_X \otimes F \otimes G \otimes L^{a+b}).
\]
Finally, notice that $\chi( (L^b \otimes G) \boxtimes (L^a \otimes F) ) = \chi(L^b \otimes G) \cdot \chi(L^a \otimes F)$ by the K\"{u}nneth formula.
\end{proof}

\subsection{Hilbert polynomial of $\sigma_3$}
We want to compute $P_{\sigma_3}(\ell) = \chi(\mathcal{O}_{\sigma_3}(\ell))$ for $\ell \in \mathbf{Z}$. The basic strategy is the same as in the case of $\sigma_2$, but the computation becomes considerably more complicated. Rather than presenting every detailed step of the calculation, we focus on the essential ideas and key steps of the argument. The omitted part should be manageable for the reader. As before, we start by considering the short exact sequence
\[
0 \longrightarrow \cI_{\sigma_2/\sigma_3} \longrightarrow \cO_{\sigma_3} \longrightarrow \cO_{\sigma_2} \longrightarrow 0.
\]
We find
\[
\chi(\cO_{\sigma_3}(\ell))
= \chi (\cO_{\sigma_2}(\ell)) + \chi(\mathscr{I}_{\sigma_2/\sigma_3}(\ell))
= P_{\sigma_2}(\ell) + \chi(\mathscr{I}_{\sigma_2/\sigma_3}(\ell)).
\]
By Theorem \ref{thm:secantvariety} (3), we have
\[
\chi(\mathscr{I}_{\sigma_2/\sigma_3}(\ell)) = \chi(\cO_{B^3}(\ell H_3 - Z_2^3)). 
\]
Recall that $\cO_{B^3}(-Z_2^3) = \cO_{B^3}(-3) \otimes \pi_3^* A_{3,L}$. For an integer $i \geq 1$, we have a short exact sequence
\[
0 \longrightarrow \cO_{B^3}(\ell H_3-(i+1)Z_2^3)
  \longrightarrow \cO_{B^3}(\ell H_3 -i Z_2^3 )
  \longrightarrow \cO_{Z_2^3}(\ell H_3 - iZ_2^3)
  \longrightarrow 0.
\]
Now, assume $\ell = 3m+2$ for some integer $m \geq 0$. As $\cO_{B^3}(\ell H_3 - (m+1)Z_2^3) = \cO_{B^3}(-1) \otimes \pi_3^* A_{3,L}^{m+1}$, we get $\chi(\cO_{B^3}(\ell H_3 - (m+1)Z_2^3) ) = 0$. Considering the above short exact sequences for $i=1, \ldots, m$, we see that
\[
\chi(\cO_{B^3}(\ell H_3 - Z_2^3)) = \sum_{i=1}^m \chi(\cO_{Z_2^3}(\ell H_3 - iZ_2^3)).
\]
Next, from the short exact sequence
\[
0 \longrightarrow \alpha_{2,3,*}\cO_{B^{2,3}}(-\tilde{\tau}^*Z_1^2) \longrightarrow \cO_{Z_2^3} \longrightarrow \cO_{Z_1^3} \longrightarrow 0,
\]
we get
\[
\chi(\cO_{Z_2^3}(\ell H_3 - iZ_2^3)) = \chi( \alpha_{2,3,*}\cO_{B^{2,3}}(-\tilde{\tau}^*Z_1^2) \otimes \cO_{Z_2^3}(\ell H_3 - iZ_2^3)) + \chi(\cO_{Z_1^3}(\ell H_3 - iZ_2^3)).
\]
Note that
\begin{align*}
& \chi( \alpha_{2,3,*}\cO_{B^{2,3}}(-\tilde{\tau}^*Z_1^2) \otimes \cO_{Z_2^3}(\ell H_3 - iZ_2^3)))
= \chi \big( (\tau^* (S^{\ell - 3i-2} E_{2,L} \otimes A_{2,L}^{i+1}) \otimes \res^* L^i)(-2iF_2) \big);\\
& \chi(\cO_{Z_1^3}(\ell H_3 - iZ_2^3)) = \chi\big( (\tau^* A_{2,L}^i \otimes \res^* L^{\ell - 2i})(-2iF_2) \big).
\end{align*}
The Euler characteristics on the right-hand side are defined on $X^{[2,3]}$. For an integer $j \geq 0$, we have a short exact sequence
\[
0 \longrightarrow \cO_{X^{[2,3]}}(-(j+1)F_2) \longrightarrow \cO_{X^{[2,3]}}(-jF_2) \longrightarrow \cO_{F_2}(-jF_2) \longrightarrow 0.
\]
Recall that there is the blow-up $\bl \colon X^{[2,3]} \to X \times X^{[2]}$ along  $\mathcal{Z}_2=X^{[1,2]}$ with exceptional divisor $F_2$. Let $N:=N_{\mathcal{Z}_2/X \times X^{[2]}}$ be the normal bundle of $\mathcal{Z}_2$ in $X \times X^{[2]}$.
Considering the above short exact sequence for $j=0,\ldots, 2i-1$ after tesnsoring by relevant vector bundles on $X^{[2,3]}$, we see that
\begin{align*}
& \chi \big( (\tau^* (S^{\ell - 3i-2} E_{2,L} \otimes A_{2,L}^{i+1}) \otimes \res^* L^i)(-2iF_2) \big)
\\[-5pt]
& = \chi(S^{\ell - 3i-2} E_{2,L} \otimes A_{2,L}^{i+1}) \cdot \chi(L^i) - \sum_{j=0}^{2i-1} \chi(S^j N^{\vee} \otimes \rho^* (S^{\ell-3i-2} E_{2,L} \otimes A_{2,L}^{i+1}) \otimes \res^* L^i);\\
& \chi\big( (\tau^* A_{2,L}^i \otimes \res^* L^{\ell - 2i})(-2iF_2) \big)
= \chi(A_{2,L}^i) \cdot \chi(L^{\ell-2i}) - \sum_{j=0}^{2i-1}\chi(S^j N^{\vee} \otimes (\tau^* L^i \otimes \res^* L^{\ell-i})(-2iF_1)).
\end{align*}
The Euler characteristics on the right-end terms are defined on $X^{[1,2]}$. 
From the suitable symmetric powers of the short exact sequence 
\[
0 \longrightarrow  \tau^* L(-F_1) \longrightarrow \rho^* E_{2,L} \longrightarrow \res^*L \longrightarrow 0,
\]
we deduce that
\[
\chi(S^j N^{\vee} \otimes \rho^* (S^{\ell-3i-2} E_{2,L} \otimes A_{2,L}^{i+1}) \otimes \res^* L^i)
= \sum_{k=0}^{\ell-3i-2} \chi\big(S^j N^{\vee} \otimes (\tau^* L^{k + i+1} \otimes \res^* L^{\ell-i-k-1})(-(k+2i+2)F_1)\big).
\]
Putting together all the results obtained so far, we have
\begin{align*}
P_{\sigma_3}(3m+2) &= P_{\sigma_2}(3m+2) + \sum_{i=1}^m \Big(\chi(S^{3m - 3i} E_{2,L} \otimes A_{2,L}^{i+1}) \cdot \chi(L^i) + \chi(A_{2,L}^i) \cdot \chi(L^{3m-2i+2})\Big)\\[-5pt]
& ~ - \sum_{i=1}^m \sum_{j=0}^{2i-1} \chi\big(S^j N^{\vee} \otimes (\tau^* L^i \otimes \res^* L^{3m-i+2})(-2iF_1)\big)\\[-5pt]
& ~ -  \sum_{i=1}^m \sum_{j=0}^{2i-1} \sum_{k=0}^{3m-3i} \chi\big(S^j N^{\vee} \otimes (\tau^* L^{k + i+1} \otimes \res^* L^{3m-i-k+1})(-(k+2i+2)F_1)\big).
\end{align*}
for $m \geq 0$. 
In the end, it remains to compute $\chi(S^a E_{2,L} \otimes A_{2,L}^b)$ and $\chi(S^a N^{\vee} \otimes (\tau^* L^b \otimes \res^* L^c)(-dF_1))$. Finally, Theorem \ref{thm:main2} $(2)$ follows from the next two lemmas, which complete the remaining tasks, combined with Lagrange interpolation.

\begin{lem}
For integers $a, b \geq 0$, we have
\begin{align*}
& \chi(S^a E_{2,L} \otimes A_{2,L}^b) \\[-5pt] 
&= P_{\sigma_2}(a+2b) + \sum_{k=1}^n (-1)^k h^k(X, \mathcal{O}_X) \cdot \chi(L^{a+2b}) - \sum_{i=1}^b \Bigg( \chi(L^{b-i}) \cdot \chi(L^{a+b+i})
- \sum_{j=0}^{2(b-i)-1} \chi(S^j \Omega_X \otimes L^{a+2b}) \Bigg). 
\end{align*}
\end{lem}

\begin{proof}
Note that $\chi(S^a E_{2,L} \otimes A_{2,L}^b) = \chi(\cO_{B^2}(a) \otimes \pi_2^* A_{2,L}^b)$. As $\cO_{B^2}(-Z_1^2) = \cO_{B^2}(-2) \otimes \pi_2^* A_{2,L}$ and $Z_1^2= X^{[1,2]}$, we have a short exact sequence
$$
0 \longrightarrow \cO_{B^2}(a+2i-2) \otimes \pi_2^* A_{2,L}^{b-i+1} \longrightarrow \cO_{B_2}(a+2i)  \otimes \pi_2^* A_{2,L}^{b-i} \longrightarrow (\tau^* L^{b-i} \otimes \res^* L^{a+b+i} )(-2(b-i)F_1) \longrightarrow 0,
$$
for $i=1,\ldots, b$. We find
$$
\chi(\cO_{B^2}(a) \otimes \pi_2^* A_{2,L}^b) = \chi(\cO_{B^2}(a+2b)) - \sum_{i=1}^b \chi\big( (\tau^* L^{b-i} \otimes \res^* L^{a+b+i} )(-2(b-i)F_1)\big). 
$$
Now, by Theorem \ref{thm:main1} $(1)$,  
$$
\chi(\cO_{B^2}(a+2b)) = \chi(S^{a+2b} E_{2,L}) = P_{\sigma_2}(a+2b) + \sum_{k=1}^n (-1)^k h^k(X, \mathcal{O}_X) \cdot \chi(L^{a+2b}),
$$
and by Lemma \ref{lem:chi(L^aL^b(-cF))}, 
\[
\chi\big( (\tau^* L^{b-i} \otimes \res^* L^{a+b+i} )(-2(b-i)F_1)\big)
= \chi(L^{b-i}) \cdot \chi(L^{a+b+i}) - \sum_{j=0}^{2(b-i)-1} \chi(S^j \Omega_X \otimes L^{a+2b}). \qedhere 
\]
\end{proof}

\begin{lem}
For integers $a,b,c,d \geq 0$, we have
\begin{align*}
&\chi(S^a N^{\vee} \otimes (\tau^* L^b \otimes \res^* L^c)(-dF_1))\\[-5pt]
&= \chi(S^a \Omega_X \otimes L^c) \cdot \chi(L^b)  -  \sum_{i=1}^d \chi(S^a \Omega_X \otimes S^{d-i}\Omega_X \otimes L^{b+c}) -\sum_{j=1}^a \chi(S^{a-j}\Omega_X \otimes S^{d+2j-1} \Omega_X \otimes L^{b+c}) .
\end{align*}
\end{lem}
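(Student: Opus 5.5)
The plan is to reduce to Lemma \ref{lem:chi(L^aL^b(-cF))} by comparing the conormal bundle $N^{\vee}$ of $\mathcal{Z}_2 = X^{[1,2]} \subseteq X \times X^{[2]}$ with the pullback $\res^*\Omega_X$. The formula dictates this. If we apply Lemma \ref{lem:chi(L^aL^b(-cF))} with $F=\mathcal{O}_X$, $G=S^a\Omega_X$ and $c=d$, we get
$$
\chi\big((\tau^*L^b\otimes \res^*(S^a\Omega_X\otimes L^c))(-dF_1)\big) = \chi(S^a\Omega_X\otimes L^c)\cdot\chi(L^b) - \sum_{i=1}^d \chi(S^a\Omega_X\otimes S^{d-i}\Omega_X\otimes L^{b+c}).
$$
This accounts for the first two terms of the claimed identity. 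It therefore suffices to show that the cokernel of a natural injection $S^aN^{\vee}\hookrightarrow \res^*S^a\Omega_X$, after twisting by $(\tau^*L^b\otimes\res^*L^c)(-dF_1)$, has Euler characteristic $\sum_{j=1}^a\chi(S^{a-j}\Omega_X\otimes S^{d+2j-1}\Omega_X\otimes L^{b+c})$.

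\textbf{Step 1: relate $N^\vee$ to $\res^*\Omega_X$.} The embedding $X^{[1,2]}\to X\times X^{[2]}$ is $(\tau,\rho)$, and it is a section over the first factor. Away from $F_1$ the map $\rho$ is étale, and the normal direction is the motion of the residual point. I therefore expect $N\cong \res^*T_X$ away from $F_1$, and an elementary modification along $F_1$:
$$
0\lra N^{\vee}\lra \res^*\Omega_X \lra Q\lra 0,
$$
where $Q$ is a line bundle on $F_1$. Here $F_1=\mathbb{P}(\Omega_X)$ over $\Delta\cong X$, with $\mathcal{O}_{F_1}(-F_1)=\mathcal{O}_{\mathbb{P}(\Omega_X)}(1)$. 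I will derive this by comparing the conormal sequences for $X^{[1,2]}\subseteq X\times X^{[2]}$ with $d\rho\colon \rho^*\Omega_{X^{[2]}}\to\Omega_{X^{[1,2]}}$. The cokernel of $d\rho$ is supported on the ramification divisor $F_1$. I will read off $Q$ by a local computation in the standard coordinates $x+y$, $(x-y)^2$ near the diagonal. This identification, in particular pinning down the exact twist of $Q$, is the step I expect to be the main obstacle. I would try the guess $Q\cong\mathcal{O}_{F_1}(-F_1)\otimes(\text{pullback from }\Delta)$ first, and test it against the case $n=1$.

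\textbf{Step 2: filter the symmetric power.} From the sequence in Step 1, the quotient $\res^*S^a\Omega_X/S^aN^\vee$ has a filtration whose graded pieces are supported on $F_1$. These pieces are $S^{a-j}(N^\vee|_{F_1})\otimes Q^{\otimes j}$ for $1\le j\le a$. Using the restriction of Step 1 to $F_1$, I will rewrite each piece up to further filtration as $\pi^*S^{a-j}\Omega_X\otimes\mathcal{O}_{\mathbb{P}(\Omega_X)}(e_j)$. The twists $e_j$ should come out so that, after tensoring with $\mathcal{O}(-dF_1)$, the total twist is $\mathcal{O}_{\mathbb{P}(\Omega_X)}(d+2j-1)$.

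\textbf{Step 3: push forward and sum.} On $F_1$ the bundle $\tau^*L^b\otimes\res^*L^c$ restricts to $\pi^*L^{b+c}$. Since $\pi_*\mathcal{O}_{\mathbb{P}(\Omega_X)}(k)=S^k\Omega_X$ and the higher direct images vanish for $k\ge 0$, the $j$-th piece has Euler characteristic $\chi(S^{a-j}\Omega_X\otimes S^{d+2j-1}\Omega_X\otimes L^{b+c})$. Additivity of $\chi$ over the filtration then gives the stated identity.
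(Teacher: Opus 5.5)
Your reduction through Lemma~\ref{lem:chi(L^aL^b(-cF))} and the elementary modification in Step~1 are the paper's key input. However, Step~2 fails as you describe it. The filtration of $\res^*S^a\Omega_X$ by $S^{a-j}\res^*\Omega_X\cdot S^jN^\vee$ does have graded pieces $S^{a-j}(N^\vee|_{F_1})\otimes Q^{j}$. But restricting Step~1 to $F_1$ gives $0\to\mathcal{O}_{\mathbb{P}(\Omega_X)}(2)\to N^\vee|_{F_1}\to K\to 0$ with $K=\ker(\pi^*\Omega_X\to\mathcal{O}_{\mathbb{P}(\Omega_X)}(1))$. So in general $S^{a-j}(N^\vee|_{F_1})$ is not a twist of $\pi^*S^{a-j}\Omega_X$, and the $j$-th piece does not have Euler characteristic $\chi(S^{a-j}\Omega_X\otimes S^{d+2j-1}\Omega_X\otimes L^{b+c})$. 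Take $a=2$ and write $g(p,e)=\chi(S^p\Omega_X\otimes S^e\Omega_X\otimes L^{b+c})$. Your two pieces contribute $g(1,d+1)+g(0,d+3)-g(0,d+2)$ and $g(0,d+2)$, not $g(1,d+1)$ and $g(0,d+3)$. Even for $n=1$ the $j$-th piece is $\omega_X^{2a-j+d}\otimes L^{b+c}$ rather than $\omega_X^{a+j+d-1}\otimes L^{b+c}$. Only the total is right, after a telescoping across different $j$ that your proposal does not contain. The clean repair is to filter the cokernel $C=\res^*S^a\Omega_X/S^aN^\vee$ $F_1$-adically, $C_j=\mathcal{O}(-(j-1)F_1)\cdot C$, so that $C_{a+1}=0$. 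Take a local basis $e_1,\dots,e_n$ of $\res^*\Omega_X$ with $N^\vee=\langle se_1,e_2,\dots,e_n\rangle$, where $s$ is an equation of $F_1$. One checks $C_j/C_{j+1}\cong\pi^*S^{a-j}\Omega_X\otimes\mathcal{O}_{F_1}(-(2j-1)F_1)$, after which your Step~3 applies piece by piece. This is the sheaf-level version of the paper's argument, which stays in $K_0(X^{[1,2]})$. From $\sigma_t(N^\vee)=\sigma_t(\res^*\Omega_X)\,(1-[\mathcal{O}(-F_1)]t)/(1-[\mathcal{O}(-2F_1)]t)$ the paper gets $[S^aN^\vee]=[S^a\res^*\Omega_X]+\sum_{i=1}^a([\mathcal{O}(-2iF_1)]-[\mathcal{O}(-(2i-1)F_1)])[S^{a-i}\res^*\Omega_X]$. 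It then applies Lemma~\ref{lem:chi(L^aL^b(-cF))} to each term; the $(d+2i)$- and $(d+2i-1)$-terms differ by exactly $-\chi(S^{a-i}\Omega_X\otimes S^{d+2i-1}\Omega_X\otimes L^{b+c})$.

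Step~1 also has to be pinned down rather than guessed. $Q$ must be exactly $\mathcal{O}_{F_1}(-F_1)$, since an extra twist pulled back from $\Delta$ would change every summand. The comparison you propose does give this. Let $p\colon X^{[1,2]}\to X$ be the $X$-coordinate of the embedding $(p,\rho)\colon X^{[1,2]}\cong\mathcal{Z}_2\subseteq X\times X^{[2]}$. The conormal sequence identifies $N^\vee$ with $\{\alpha\in p^*\Omega_X : dp(\alpha)\in\operatorname{im}(d\rho)\}$. Hence $p^*\Omega_X/N^\vee\hookrightarrow\operatorname{coker}(d\rho)=\Omega_{X^{[1,2]}/X^{[2]}}\cong\mathcal{O}_{F_1}(-F_1)$, because $\rho$ is a double cover ramified along $F_1$, and a local computation gives surjectivity. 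This also shows the injection lands in $p^*\Omega_X$. Off $F_1$ the normal direction is the motion of the marked point $p$, while moving the other point of $\xi$ stays inside $\mathcal{Z}_2$. With the embedding $(\tau,\rho)$ that you wrote down, you would get $N^\vee\subseteq\tau^*\Omega_X$, and the first term would become $\chi(S^a\Omega_X\otimes L^b)\cdot\chi(L^c)$. The stated formula uses the paper's identification $\pr_1|_{\mathcal{Z}_2}=\res$, i.e.\ the embedding $(\res,\rho)$. This is also the convention under which $\res$ on $X^{[2,3]}$ restricts to the $X$-factor of $\mathcal{Z}_2$, so you should fix it explicitly.
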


\begin{proof}
Recall that $N^{\vee}=N_{X^{[1,2]}/X \times X^{[2]}}^{\vee}$ is the conormal bundle of $X^{[1,2]}$ in $X \times X^{[2]}$.
We have two short exact sequences
$$
0 \longrightarrow N^{\vee}
\longrightarrow \res^*\Omega_X \longrightarrow \cO_{F_1}(-F_1) \longrightarrow 0
~\text{ and }~0 \lra \cO_{X^{[1,2]}}(-2F_1) \lra \cO_{X^{[1,2]}}(-F_1) \lra \cO_{F_1}(-F_1) \lra 0.
$$
The left short exact sequence can be derived from the fact that $F_1$ is the ramification divisor of $\rho \colon X^{[1,2]} \to X^{[2]}$ so that there is a short exact sequence
$$
0 \longrightarrow \rho^* \Omega_{X^{[2]}} \longrightarrow \Omega_{X^{[1,2]}} \longrightarrow \cO_{F_1}(-F_1) \longrightarrow 0.
$$
We have
$$
[\res^*\Omega_X] + [\cO_{X^{[1,2]}}(-2F_1)] = [N^{\vee}] + [\cO_{X^{[1,2]}}(-F_1)]
$$
in the Grothendieck group $K_0(X^{[1,2]})$. Note that $K_0(X^{[1,2]})$ is $\lambda$-ring (we refer to \cite[Chapter I]{FL RR algebra} for some details on $\lambda$-rings). Following \cite[Chapter I, \S1]{FL RR algebra}, for a locally free sheaf $F$ on $X^{[1,2]}$, we set
$$
 \lambda_t(F) := \sum_{i=0}^{\infty} [\wedge^iF]t^i \quad \text{and} \quad \sigma_t(F):= \lambda_{-t}(F)^{-1}=\sum_{i = 0}^{\infty} [S^i F] t^i.
$$
The last equality on the right-hand side holds by \cite[Corollary 2.4 in Chapter V]{FL RR algebra}. Then
$$
\sigma_t(\cO_{X^{[1,2]}}(-F_1)) = \frac{1}{1-[\cO_{X^{[1,2]}}(-F_1)]t}~~\text{ and }~~\sigma_t(\cO_{X^{[1,2]}}(-2F_1)) = \frac{1}{1-[\cO_{X^{[1,2]}}(-2F_1)]t}\;.
$$
From the following relation
$$
\sigma_t(N^{\vee}) \cdot \sigma_t(\cO_{X^{[1,2]}}(-F_1)) = \sigma_t(\res^*\Omega_X)\cdot \sigma_t(\cO_{X^{[1,2]}}(-2F_1)),
$$
we deduce that
$$
\sigma_t(N^{\vee}) = \sigma_t(\res^*\Omega_X) \cdot \frac{1- [ \cO_{X^{[1,2]}}(-F_1)]t}{1-[ \cO_{X^{[1,2]}}(-2F_1)]t}.
$$
Then we find
$$
[S^a N^{\vee}] = [S^a \res^*\Omega_X] + \sum_{i=1}^a \big( [\cO_{X^{[1,2]}}(-2iF_1)] - [\cO_{X^{[1,2]}}(-(2i-1)F_1)]  \big) \cdot [S^{a-i}\res^*\Omega_X].
$$
Considering the Grothendieck--Riemann--Roch theorem, we obtain
\begin{align*}
 &   \chi(S^a N^{\vee} \otimes (\tau^* L^b \otimes \res^* L^c)(-dF_1)) = \chi\big( (\tau^* L^b \otimes \res^*(S^a \Omega_X \otimes L^c))(-dF_1) \big) \\
 &   ~\quad~ + \sum_{i=1}^a \chi\big( (\tau^* L^b \otimes \res^*(S^{a-i} \Omega_X \otimes L^c))(-(d+2i)F_1) \big)  - \sum_{i=1}^a \chi\big( (\tau^* L^b \otimes \res^*(S^{a-i} \Omega_X \otimes L^c)(-(d+2i-1)F_1) \big).
\end{align*}
Applying Lemma \ref{lem:chi(L^aL^b(-cF))} and performing some straightforward calculations for a lot of cancellations, we can derive the claimed formula.
\end{proof}

\bibliographystyle{alpha}

\end{document}